\newtheorem{theorem}{Theorem}[section]
\newtheorem{lemma}[theorem]{Lemma}
\newtheorem{fact}[theorem]{Fact}
\newtheorem{proposition}[theorem]{Proposition}
\newtheorem{corollary}[theorem]{Corollary}
\newtheorem{conjecture}[theorem]{Conjecture}
\theoremstyle{definition}
\newtheorem{definition}[theorem]{Definition}
\newtheorem{example}[theorem]{Example}
\newtheorem{remark}[theorem]{Remark}
\newtheorem{question}[theorem]{Question}
\newtheorem{notation}[theorem]{Notation}
\newcommand{\Zz}{{\mathds{Z}}}
\newcommand{\Ff}{{\mathds{F}}}
\newcommand{\Cc}{{\mathds{C}}}
\newcommand{\Rr}{{\mathds{R}}}
\newcommand{\Qq}{{\mathds{Q}}}
\newcommand{\gm}{\mathbb{G}_{\rm{m}}}
\newcommand{\cH}{\mathcal{H}}
\newcommand{\cO}{\mathcal{O}}
\newcommand{\cP}{\mathcal{P}}
\title{The class of Krasner hyperfields is not elementary}
\author[P. B{\L}ASZKIEWICZ]{Piotr B{\l}aszkiewicz$^{\clubsuit}$}
\thanks{$^{\clubsuit}$ Supported by the Narodowe Centrum Nauki grant no. 2021/43/B/ST1/00405.}
\address{$^{\spadesuit}$Instytut Matematyczny\\
Uniwersytet Wroc{\l}awski\\
Wroc{\l}aw\\
Poland}
\email{piotr.blaszkiewicz@math.uni.wroc.pl} 
\author[P. KOWALSKI]{Piotr Kowalski$^{\spadesuit}$}
\thanks{$^{\spadesuit}$ Supported by the Narodowe Centrum Nauki grant no. 2021/43/B/ST1/00405.}
\address{$^{\spadesuit}$Instytut Matematyczny\\
Uniwersytet Wroc{\l}awski\\
Wroc{\l}aw\\
Poland}
\email{pkowa@math.uni.wroc.pl} \urladdr{http://www.math.uni.wroc.pl/\textasciitilde pkowa/ }
\thanks{2020 \textit{Mathematics Subject Classification} Primary 03C60; Secondary 16Y20.}
\thanks{\textit{Key words and phrases}. Hyperfield, Elementary class, Chebotarev's density theorem.}
\DeclareMathOperator{\rk}{rr}\DeclareMathOperator{\gal}{Gal}
\DeclareMathOperator{\cha}{char}
\DeclareMathOperator{\id}{id}
\begin{document}

\maketitle
\begin{abstract}
We show that the class of Krasner hyperfields is not elementary. To show this, we determine the rational rank of quotients of multiplicative groups in field extensions. Our argument uses Chebotarev's density theorem. We also discuss some related questions.
\end{abstract}

\section{Introduction}\label{intro}
The notion of a \textit{hyperfield} (\emph{hypercorps}) was introduced first by Marc Krasner in \cite{kr1} as a tool to study valued fields. In his later paper \cite{kr2}, he introduced the quotient construction of a hyperfield from a given field and a subgroup of its multiplicative group (see Theorem \ref{khyp}).

The question whether all hyperfields come from this quotient construction has been an open problem until the example of Massouros, who showed in \cite{Mass} that it is not the case. Nevertheless, the class of \textit{Krasner hyperfields} (i.e., hyperfields obtained by this quotient construction) contains a lot of known examples of hyperfields. Among them, there are the hyperfields known as RV-sorts, which are studied in the model theory of valued fields. They were first introduced by Joseph Flenner in his PhD thesis \cite{jf} as a tool to obtain (relative) quantifier elimination for valued fields. Flenner proved that RV-sorts are bi-interpretable with $amc$-structures (three sorted structures) introduced by Franz-Viktor Kuhlmann in \cite{FVK}. Currently, hyperfields in the form of RV-sorts are one of the main objects used to study model theory of valued fields (see e.g. \cite{inp, lee, burden, pierre}).

In view of the usefulness of Krasner hyperfields for the model theory of valued fields discussed above, we were motivated to study model theoretical properties of Krasner hyperfields themselves. Since the definition of Krasner hyperfields is purely algebraical, the first question we faced was: ``Is the class of Krasner hyperfields elementary?''. Based on the results of Alain Connes and Caterina Consani from \cite{cc}, we show in this paper that this class is \emph{not} elementary.

The paper is organized as follows. In Section \ref{prelim}, we collect the necessary facts and results about hyperfields as well as results from field theory.
In Section \ref{mainsec}, we prove our main algebraic result (Theorem \ref{mainalg}) and use it to show that the class of Krasner hyperfields is not elementary (Corollary \ref{main}). In Section \ref{specsec}, we quickly state and prove this algebraic result in its proper generality (Theorem \ref{gen}) and discuss some model-theoretical problems related with hyperfields and the algebraic methods used in this paper (Question \ref{q} and Conjecture \ref{c}).

We would like to thank Franz-Viktor Kuhlmann for his careful reading of this paper and his suggestions for improvement. Last but not least we would like to thank the members of the Wroc{\l}aw model theory seminar for their insightful questions and comments concerning this paper.

\section{Preliminaries}\label{prelim}
In this section, we introduce the necessary notions we are going to use throughout this paper (or we cite the necessary sources). Further we will present the results from the paper of Alain Connes and Caterina Consani \cite{cc}, where (among other things) they studied connections between Krasner hyperfields and projective geometries. Finally, we state the Chebotarev's density theorem which will play a crucial role in the arguments in Section 3.

Not everything from this section is directly needed for the arguments in Section 3, e.g. Theorem \ref{theorem}, Facts \ref{n1}, \ref{n2}, or Remark \ref{d2} will not be used directly. However, we hope that these extra results provide a greater picture and they also show how to avoid possible ``wrong paths'' in the main argument.
\subsection{Hyperfields}

The notion of a hyperfield, as one could expect, generalises the one of a field. The twist is that the addition is a multivalued operation, so instead of an element it returns a nonempty set.
\begin{definition}
A hyperfield is a tuple $(\cH, +,\cdot, 0, 1)$ where $(\cH,\cdot, 1)$ is an abelian group and $$+:\cH\times\cH\rightarrow\cP(\cH)\setminus\{\emptyset\}$$
satisfies the following axioms, where $x,y,z\in \cH$ and $+,\cdot$ are naturally extended to subsets of $\cH$:
    \begin{itemize}
            \item $x+y=y+x$ (commutativity),
            \item $(x+y)+z = x+(y+z)$ (associativity),
            \item for each $x\in \cH$, there is a unique $-x\in \cH$ such that $0\in x+(-x)$ (unique inverse),
            \item $z\in x+y\Rightarrow y\in z+(-x)$ (reversibility),
            \item $x+0=\{x\}$ (neutral element),
            \item $z\cdot (x+y)=z\cdot x+z\cdot y$ (distributivity).
    \end{itemize}
\end{definition}
Note that every field can be viewed as a hyperfield in the obvious way. For more details and preliminary notions concerning hyperfields (such as homomorphisms, hyperideals, etc.) we direct the reader to \cite{davsal, jjun, Ale1}.

We state now the theorem of Krasner which was mentioned in the introduction.
\begin{theorem}\label{khyp}
    Let $K$ be a field and $G$ a subgroup of $K^\times$. The quotient $K^\times/G$ together with an extra element $0$ and $+,\cdot$ defined as:
    \begin{itemize}
    \item $aG\cdot bG:=abG$,
    \item $aG+bG:=\{(x+y)G\mid x\in aG, y\in bG\}$
    \end{itemize}
    forms a hyperfield, where $1=G$.
\end{theorem}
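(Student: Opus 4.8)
The plan is to verify the hyperfield axioms one by one for the structure $\cH := (K^\times/G)\cup\{0\}$, where $0$ is a new symbol declared absorbing for $\cdot$ (so $0\cdot aG = aG\cdot 0 = 0\cdot 0 = 0$) and neutral for $+$ (so $aG+0=\{aG\}$, $0+0=\{0\}$). That $(K^\times/G,\cdot,G)$ is an abelian group is immediate, as it is literally the quotient of the abelian group $K^\times$ by the subgroup $G$; commutativity of hyperaddition is immediate from commutativity of $+$ in $K$; hyperaddition is well-defined and takes values in nonempty subsets of $\cH$ because $aG$ and $bG$ are nonempty; and the neutral-element axiom holds by the convention above. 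It therefore remains to treat the unique inverse, reversibility, distributivity, and --- the real content --- associativity. Throughout I adopt the convention that, for $x,y\in K$, the symbol $(x+y)G$ denotes the element $0\in\cH$ when $x+y=0$ and the coset $(x+y)G$ otherwise.

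For the unique inverse, set $-(aG):=(-a)G$; this is well defined and $0\in aG+(-a)G$ since $a+(-a)=0$. Conversely, if $0\in aG+bG$, then $x+y=0$ for some $x\in aG$, $y\in bG$, whence $y=-x$ and $bG=yG=(-x)G=(-a)G$; so $(-a)G$ is the only element with this property. For reversibility, suppose $cG\in aG+bG$, witnessed by $x\in aG$, $y\in bG$ with $(x+y)G=cG$; then $y=(x+y)+(-x)$ exhibits $bG=yG$ as an element of $cG+(-a)G$, since $x+y$ represents $cG$ and $-x\in(-a)G$ (the subcase $cG=0$ reducing to the unique-inverse computation). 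Distributivity is a similarly short check: for $c\in K^\times$ one has $c\cdot(aG)=(ca)G$, and then $t\mapsto ct$ gives a bijection between representatives of $aG+bG$ and of $(ca)G+(cb)G$ compatible with multiplication by $cG$, so $cG\cdot(aG+bG)=(ca)G+(cb)G=cG\cdot aG+cG\cdot bG$; the instances involving the adjoined $0$ are trivial.

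The main obstacle is associativity, $(aG+bG)+cG=aG+(bG+cG)$. The key step is to show that each of these iterated hypersums equals the ``total sum set''
\[
\Sigma := \{(x+y+z)G \mid x\in aG,\ y\in bG,\ z\in cG\}\subseteq\cH,
\]
which is visibly symmetric in $a,b,c$; the identity then follows. For $(aG+bG)+cG\subseteq\Sigma$, an element is of the form $w\in s+cG$ with $s\in aG+bG$: if $s=0$ then $w=cG$, and one rewrites $cG=(x+y+z)G$ using $x\in aG,y\in bG$ with $x+y=0$ and any $z\in cG$; if $s=tG$ with $t=x_0+y_0\neq 0$ ($x_0\in aG,\ y_0\in bG$), then $w=(t'+z)G$ with $t'\in tG$, $z\in cG$, and writing $t'=tg$ with $g\in G$ gives $w=(x_0g+y_0g+z)G\in\Sigma$. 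For the reverse inclusion, given $w=(x+y+z)G\in\Sigma$ set $t=x+y$: if $t\neq 0$ then $tG\in aG+bG$ and $w\in tG+cG$; if $t=0$ then $0\in aG+bG$, so $0+cG=\{cG\}\subseteq(aG+bG)+cG$ and $w=zG=cG$. The same argument, or symmetry of $\Sigma$, handles $aG+(bG+cG)$, and the associativity instances with one summand equal to $0$ reduce at once to $s+0=\{s\}$. The one thing requiring care is precisely the passage through the adjoined $0$ when a partial sum of representatives vanishes; once the convention on $(x+y)G$ is fixed this causes no difficulty, and the theorem follows.
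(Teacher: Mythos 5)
The paper does not prove this statement at all: it is quoted as Krasner's theorem and attributed to \cite{kr2}, so there is no in-paper argument to compare against. Your verification is correct and is the standard one. You rightly isolate the only nontrivial axiom, associativity, and the key device --- showing that both iterated hypersums coincide with the manifestly symmetric total-sum set $\Sigma=\{(x+y+z)G \mid x\in aG,\ y\in bG,\ z\in cG\}$ --- is exactly the right move; your explicit convention that $(x+y)G$ means the element $0\in\cH$ when $x+y=0$, and your handling of the degenerate case where a partial sum of representatives vanishes (using $0+cG=\{cG\}$ and the fact that $z\in cG$ forces $z\neq 0$), close the only points where such a proof can go wrong. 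The remaining axioms (unique inverse, reversibility via $y=(x+y)+(-x)$, distributivity via the bijection $t\mapsto ct$) are checked correctly. No gaps.
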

\begin{notation}\label{not1}
We will abbreviate $(K^\times/G)\cup\{0\}$ from Theorem \ref{khyp} as $K/G$.
\end{notation}

\begin{definition}
    If a hyperfield $\cH$ is isomorphic to $K/G$ (as in Notation \ref{not1}), then we call it a \textbf{Krasner hyperfield}.
\end{definition}

\subsection{Projective geometries and CC-hyperfields}
The material of this subsection comes from \cite{cc}. We introduce first the definition of $CC$-hyperfields.
\begin{definition}
We call a hyperfield $(\cH, +,\cdot, 0, 1)$ a \textbf{$CC$-hyperfield}, if
\[
\forall x\in \cH\ \  x+x=\{x,0\}.
\]
\end{definition}
\begin{remark}
The name ``$CC$-hyperfield'' does not appear anywhere except this paper. In particular, it was not used in \cite{cc} where hyperfields with this property were studied. However, the name used in \cite{cc} is quite technical (``hyperfield extensions of the Krasner hyperfield $\mathbf{K}$'') and, for simplicity, we decided to refer to this class of hyperfields as $CC$-hyperfields.
\end{remark}
There is the following nice description of Krasner CC-hyperfields.
\begin{proposition}[Proposition 2.7 in \cite{cc}]
 Let $K$ be a field and $G$ be a subgroup of $K^\times$. Assume that $G\neq\{1\}$. Then the hyperfield $K/G$ is a $CC$-hyperfield if and only if
$\{0\}\cup G$ is a subfield of $K$.
\end{proposition}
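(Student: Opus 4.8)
The plan is to unwind both conditions in terms of the multiplicative cosets of $G$ and the additive structure of $K$. First I would write $F := \{0\}\cup G$ and note that $F$ is automatically closed under multiplication and contains $0,1$ and multiplicative inverses of nonzero elements; so $F$ is a subfield of $K$ if and only if it is closed under addition, i.e. if and only if for all $g, h \in G$ we have $g + h \in \{0\}\cup G$. On the hyperfield side, the identity $1 + 1 = \{1, 0\}$ in $K/G$ means precisely $\{(x+y)G \mid x,y \in G\} = \{G, 0\}$, which (since $1+1 \ni (1+1)\cdot 1$, and translating by an arbitrary $g \in G$) is equivalent to: for every $g,h \in G$, either $g+h = 0$ or $g + h \in G$. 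So the additive-closure reformulation of ``$F$ is a subfield'' is literally the condition ``$1+1 = \{1,0\}$ in $K/G$''.

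The one genuine point to check is the upgrade from the single instance $x = 1$ of the $CC$-axiom to all $x \in \cH = K/G$. For a general nonzero element $aG \in K^\times/G$, distributivity gives $aG + aG = aG\cdot(1+1) = aG \cdot \{G, 0\} = \{aG, 0\}$, using that we already know $1 + 1 = \{G,0\}$; and for $x = 0$ the axiom $0 + 0 = \{0\}= \{0,0\}$ is immediate from the neutral-element axiom. Hence it suffices to control $1+1$, and the equivalence follows. I would also record the trivial direction explicitly: if $F = \{0\}\cup G$ is a subfield, then for $g,h \in G$ we get $g+h \in F$, so $(g+h)G$ is either $0$ (if $g + h = 0$) or $G$ (if $g+h \in G$), giving $G + G \subseteq \{G,0\}$; and $0 \in G + (-1)G = G + G$ shows the reverse inclusion, so $1+1 = \{G,0\}$ and $K/G$ is a $CC$-hyperfield.

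The main (and really only) obstacle is making sure the coset bookkeeping is clean: one must be careful that $G + G$, interpreted as in Theorem \ref{khyp}, really is the set $\{(g+h)G \mid g,h\in G\}$ and that the condition ``$G+G \subseteq \{G,0\}$'' translates back to ``$g+h \in \{0\}\cup G$ for all $g,h\in G$'' — here one uses that $G$ is a union of cosets of itself (namely just $G$) and that scaling a representative by an element of $G$ stays in the same coset. The hypothesis $G \neq \{1\}$ is used only to ensure $F = \{0\}\cup G$ is not the prime-field-less degenerate object; in fact it guarantees $|F| \geq 3$ so that the "$CC$" condition is not vacuously comparing $K/G \cong$ the Krasner hyperfield $\mathbf{K}$ with itself in a trivial way — I would remark on where exactly this is needed (or note that the statement as phrased holds even without it, with the convention that a one-element "field" is excluded). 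Everything else is a direct verification.
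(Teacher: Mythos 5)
The paper does not actually prove this proposition; it is quoted from Connes--Consani \cite{cc}, so there is no in-paper argument to compare against. Your overall route (reduce the $CC$-axiom to the single instance $1+1=\{1,0\}$ via distributivity, then translate that into a condition on sums of elements of $G$ inside $K$) is the natural one, but it contains one genuinely false step: the claim that $F=\{0\}\cup G$ is a subfield of $K$ if and only if it is closed under addition. Counterexample: $K=\Qq$, $G=\Qq_{>0}$. Then $F=\Qq_{\geqslant 0}$ is closed under addition and multiplication, contains $0$ and $1$, and contains inverses of its nonzero elements, yet it is not a subfield, since closure under addition does not give closure under negation. As written, your direction ``$CC\Rightarrow$ subfield'' only extracts additive closure of $F$ from the hyperfield condition and then invokes this false equivalence. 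The repair is cheap and uses data you already have but discard: the $CC$-condition is the set \emph{equality} $1+1=\{1,0\}$, not merely the inclusion $G+G\subseteq\{G,0\}$. From $0\in 1+1$ you get $g,h\in G$ with $g+h=0$, hence $-1=hg^{-1}\in G$, hence $F$ is closed under negation; combined with additive closure this does make $F$ a subfield. (In the example $G=\Qq_{>0}$ the hyperfield $\Qq/G$ indeed fails to be $CC$ precisely because $0\notin 1+1$, so the proposition itself is not threatened --- only your intermediate reformulation is.)

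Two smaller points. In the converse direction you verify $0\in 1+1$ but never that $1\in 1+1$; the latter requires some $g,h\in G$ with $g+h\in G$, i.e.\ $g+h\neq 0$, and this is exactly where $G\neq\{1\}$ is used: since $|F|\geqslant 3$ one may take $g=h=1$ when $\cha K\neq 2$, and $g=1$, $h\in G\setminus\{1\}$ when $\cha K=2$. Accordingly, your suggestion that the statement ``holds even without'' the hypothesis $G\neq\{1\}$ is incorrect: for $G=\{1\}$ and $\cha K=2$ the set $\{0,1\}$ is the subfield $\Ff_2$ of $K$, yet $K/G\cong K$ has $1+1=\{0\}\neq\{1,0\}$, so $K/G$ is not a $CC$-hyperfield.
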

The class of CC-hyperfields is also closely related to \emph{projective geometries} (in the sense of \emph{incidence geometry}, for the necessary notions see e.g. \cite{incidence}).
\begin{proposition}[\cite{cc} Proposition 3.1]\label{definable}
If $\cH$ is a CC-hyperfield, then there is a unique projective geometry on $\cH\setminus \{0\}$ such that for distinct $x,y\in \cH\setminus \{0\}$, the unique line through $x$ and $y$ coincides with $\{x,y\}\cup x+y$.
\end{proposition}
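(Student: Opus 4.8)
The plan is to define, for distinct $x,y\in\cH\setminus\{0\}$, the set $\ell(x,y):=\{x,y\}\cup(x+y)$, to prove that these sets are the lines of a projective geometry, and then to note that this geometry is the only one with the stated property. Two elementary facts about a $CC$-hyperfield are used repeatedly. First, $-x=x$ for every $x$, since $0\in x+x$ and the additive inverse is unique; consequently, for $x\ne y$ one has $0\notin x+y$ (otherwise $y=-x=x$) and also $x\notin x+y$ and $y\notin x+y$, so $\ell(x,y)\subseteq\cH\setminus\{0\}$ and $x+y=\ell(x,y)\setminus\{x,y\}$. Second, reversibility together with $-x=x$ gives the symmetry of incidence: $z\in x+y\iff y\in x+z\iff x\in y+z$.

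The computational core is an exchange lemma: if $z\in x+y$ with $x,y,z$ pairwise distinct, then, using $y\in x+z$, associativity, and the $CC$-identity,
\[
x+y\ \subseteq\ x+(x+z)\ =\ (x+x)+z\ =\ \{x,0\}+z\ =\ (x+z)\cup\{z\}.
\]
Running the same argument with the roles of $y$ and $z$ exchanged gives $x+z\subseteq(x+y)\cup\{y\}$, and combining the two inclusions (together with $z\in x+y$ and $y\in x+z$) yields $\ell(x,y)=\ell(x,z)$. A short case check promotes this to: $\ell(u,v)=\ell(x,y)$ for any two distinct $u,v\in\ell(x,y)$. Hence the family $\{\ell(x,y)\}$ is well defined, each of its members has at least two points, and through any two distinct points of $\cH\setminus\{0\}$ passes exactly one of them --- the incidence axiom. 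This also gives uniqueness at once: in any projective geometry with the stated line formula, every line equals the line through any two of its points and hence equals some $\ell(x,y)$, so the line set is forced.

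It remains to verify the Veblen--Young axiom, and this is exactly where the hypothesis $x+x=\{x,0\}$ is indispensable. Let $a,b,c,d$ be distinct and suppose the lines $\ell(a,b)$ and $\ell(c,d)$ share a point $p$. If $p\in\{a,b,c,d\}$, or if $\ell(a,b)=\ell(c,d)$, then three (resp.\ all four) of the points are collinear and the required incidence of $\ell(a,c)$ with $\ell(b,d)$ is immediate; otherwise $p\in a+b$ and $p\in c+d$, so
\[
\{p,0\}\ =\ p+p\ \subseteq\ (a+b)+(c+d)\ =\ (a+c)+(b+d),
\]
whence $0\in(a+c)+(b+d)$. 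Therefore there exist $u\in a+c$ and $v\in b+d$ with $0\in u+v$; but $0\in u+v$ forces $v=-u=u$, so $u=v\in(a+c)\cap(b+d)$ is a common point of $\ell(a,c)$ and $\ell(b,d)$. This is the Veblen--Young conclusion, and finishes the verification of the axioms.

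The step I expect to be the genuine obstacle is Veblen--Young: approached naively it looks as though one must hunt for the intersection point along the line through $p$, whereas the trick --- replacing $p$ by the two decompositions coming from $p\in a+b$ and $p\in c+d$, so that $p+p=\{p,0\}\subseteq(a+c)+(b+d)$, and then using that $0\in u+v$ can hold only for $v=u$ --- makes it collapse to one line. The remaining care is bookkeeping: separating out the degenerate configurations before this argument applies, and fixing an axiomatization of projective geometry permissive enough to allow two-point lines, which genuinely occur here.
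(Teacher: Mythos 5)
The paper itself offers no proof of this proposition---it is imported verbatim from Connes--Consani (\cite{cc}, Proposition 3.1)---so there is no in-paper argument to compare against. Judged on its own, your proof is correct, and it is essentially the standard verification: the exchange lemma $\ell(x,y)=\ell(x,z)$ for $z\in x+y$ gives well-definedness and the incidence axiom (hence also uniqueness of the geometry), and the identity $\{p,0\}=p+p\subseteq(a+b)+(c+d)=(a+c)+(b+d)$ together with $0\in u+v\Rightarrow v=-u=u$ gives Veblen--Young. One correction to your closing remark: two-point lines do \emph{not} occur. For $x\ne y$ the set $x+y$ is nonempty and disjoint from $\{0,x,y\}$ (the exclusions of $x$ and $y$ follow from reversibility and $-x=x$, exactly as you observe), so every line $\ell(x,y)=\{x,y\}\cup(x+y)$ has at least three points; the usual axiom that lines carry at least three points is therefore satisfied automatically, and no unusually permissive axiomatization of projective geometry is needed.
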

\begin{notation}
We will denote the above projective geometry by $\cP_\cH$.
\end{notation}
\begin{remark}\label{desa}
    If $\cH$ is a Krasner CC-hyperfield, then the corresponding projective geometry $\cP_\cH$ is the classical one, which we will see below. By Proposition \ref{definable}, $\cH=L/K^\times$, where $K$ is a subfield of $L$. Then we can view $L$ as a vector space over $K$ and consider the classical projective geometry associated to this vector space. This geometry happens to be exactly the projective geometry associated to the CC-hyperfield $L/K^\times$. In particular, such a projective geometry is always \emph{Desarguesian} and we have
    $$\dim\left(\cP_\cH\right)+1=[L:K].$$
\end{remark}
We finish this subsection with a result from \cite{cc} which will tell us later that we need to focus on Krasner CC-hyperfields of dimension one, where by the dimension of a CC-hyperfield, we always mean the dimension of its associated projective geometry.
\begin{theorem}[\cite{cc} Theorem 3.8]\label{theorem}
Let $\cH$ be a $CC$ hyperfield. Assume that the projective geometry $\cP_\cH$ is Desarguesian and of dimension at least 2. Then there exists a unique pair $(L,K)$ where $L$ is a field, and $K$ is its subfield such that
\[
\cH=L/K^\times.
\]
\end{theorem}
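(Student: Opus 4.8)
The plan is to coordinatize the projective geometry $\cP_\cH$ via the fundamental theorem of projective geometry and then to promote that coordinatization to a field structure using the multiplicative group of $\cH$. The two parts requiring real work will be (i) invoking the coordinatization correctly, where the Desarguesian and dimension hypotheses enter, and (ii) showing that the coordinatizing division ring is commutative and carries the multiplicative group of $\cH$ as $L^\times/K^\times$.

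\textbf{Step 1 (coordinatization).} By Proposition \ref{definable}, $\cP_\cH$ is a projective geometry on $\cH\setminus\{0\}$; by hypothesis it is Desarguesian and of dimension at least $2$. By the fundamental theorem of projective geometry (Veblen--Young coordinatization) there is a division ring $D$ and a left $D$-vector space $V$ with $\dim_D V=\dim(\cP_\cH)+1\ge 3$, together with an isomorphism of projective geometries $\cP_\cH\cong\Pp(V)$ sending points to $1$-dimensional subspaces and lines to $2$-dimensional subspaces; moreover the pair $(D,V)$ is unique up to isomorphism, and collineations of $\Pp(V)$ correspond to semilinear bijections of $V$ modulo scalars. (Desarguesianness is automatic once $\dim_D V\ge 4$; in the planar case $\dim_D V=3$ it is exactly what our hypothesis provides, and $\dim_D V\ge 3$ is needed for the semilinear description of collineations.) From now on identify $\cH\setminus\{0\}$ with $\Pp(V)$ and fix $e\in V$ spanning the line corresponding to $1\in\cH$.

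\textbf{Step 2 (the field).} For each $a\in\cH\setminus\{0\}$ the bijection $x\mapsto ax$ of $\cH$ is a collineation of $\cP_\cH$, since by distributivity it carries the line $\{x,y\}\cup(x+y)$ to $\{ax,ay\}\cup(ax+ay)$; hence it comes from a semilinear bijection of $V$, and $a\mapsto(x\mapsto ax)$ is a homomorphism from the abelian group $\cH\setminus\{0\}$ into the collineation group of $\Pp(V)$, acting simply transitively on points (if $ax=x$ then $a=1$). The core of the argument is to show that, because this collineation group is abelian and simply transitive on points, the division ring $D$ is in fact commutative and each collineation $x\mapsto ax$ is induced by a \emph{linear} (not merely semilinear) automorphism of $V$; one may then choose $D$-linear maps $\mu_a\colon V\to V$ inducing these collineations, normalized so that $\mu_a(e)$ spans $a$, depending multiplicatively on $a$ and fitting together into a $D$-linear map $v\mapsto\mu_v$ from $V$ into $\mathrm{End}_D(V)$ with $\mu_e=\mathrm{id}$. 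Defining $v\cdot w:=\mu_w(v)$ then makes $(V,+,\cdot,0,e)$ a field $L$: bilinearity of $\cdot$ over $+$ comes from $D$-linearity of each $\mu_w$ and of $w\mapsto\mu_w$; the relation $\mu_w\circ\mu_v=\mu_{v\cdot w}$ (forced by comparing induced collineations and evaluating at $e$) gives associativity; invertibility of $\mu_v$ for $v\ne 0$ gives multiplicative inverses; and the subspace $K:=De$ is a subfield isomorphic to $D$, with left multiplication by $\lambda e$ equal to $D$-scalar multiplication by $\lambda$.

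\textbf{Step 3 (identification and uniqueness).} As a set, $L/\Kx=\{0\}\cup(L^\times/\Kx)=\{0\}\cup\Pp(V)$, and the identification of Step 1 is a bijection $\cH\to L/\Kx$ fixing $0$ and $1$; it respects multiplication by the construction of $\cdot$, and it respects the projective geometries, since the geometry attached to $L/\Kx$ is, by Remark \ref{desa}, the classical geometry of $\Pp(V)$ used to coordinatize $\cP_\cH$. As $\cH$ is a $CC$-hyperfield and the hyperaddition of a $CC$-hyperfield is determined by its projective geometry together with the rules $x+x=\{x,0\}$, $x+0=\{x\}$ and $0+0=\{0\}$ (for distinct nonzero $x,y$ one checks that $x+y$ is the line through $x$ and $y$ with $x,y$ removed, both in $\cH$ and in $L/\Kx$), this bijection is an isomorphism of hyperfields. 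For uniqueness, if $\cH=L_1/K_1^\times=L_2/K_2^\times$ then the uniqueness clause of the fundamental theorem identifies the coordinatizing data, and any collineation $\Pp(L_1)\to\Pp(L_2)$ carrying $1$ to $1$ and intertwining the multiplicative groups $L_1^\times/K_1^\times$ and $L_2^\times/K_2^\times$ is induced by a semilinear bijection $L_1\to L_2$ which, once normalized to fix $1$, is forced to be a field isomorphism carrying $K_1$ onto $K_2$. The main obstacle is Step 2 — ruling out a non-commutative coordinatizing division ring and semilinear-but-not-linear multiplication collineations — which is precisely where the hypotheses that $\cP_\cH$ is Desarguesian and of dimension at least $2$ are used in an essential way.
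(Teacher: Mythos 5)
This statement is quoted from \cite{cc} (their Theorem 3.8) and the paper gives no proof of it, so there is no internal argument to compare against; your proposal has to be judged against the known proof in \cite{cc}, whose overall shape (coordinatize $\cP_\cH$, then use the simply transitive abelian action of $\cH\setminus\{0\}$ by multiplicative collineations) your outline does reproduce. Steps 1 and 3 are essentially fine: the Veblen--Young coordinatization applies exactly under the stated hypotheses, the multiplications are collineations by distributivity, the action is simply transitive, and the hyperaddition of a $CC$-hyperfield is indeed recovered from the geometry (for distinct nonzero $x,y$ one has $x,y\notin x+y$ by reversibility together with $-x=x$, so $x+y$ is the line minus $\{x,y\}$).

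The genuine gap is Step 2, which you yourself flag as ``the core of the argument'' and ``the main obstacle'' but then do not carry out: you assert, without proof, that an abelian group of collineations acting simply transitively on a Desarguesian $\Pp(V)$ of projective dimension $\geq 2$ forces the coordinatizing division ring $D$ to be commutative, forces each multiplication to be induced by a \emph{linear} rather than merely semilinear automorphism (a priori one only gets a homomorphism from $\cH\setminus\{0\}$ to $\mathrm{P}\Gamma\mathrm{L}(V)$, hence a possibly nontrivial map to the outer automorphisms of $D$), and that the lifts $\mu_a$ can be normalized coherently so that $a\mapsto\mu_a$ is multiplicative and $v\mapsto\mu_v$ is $D$-linear. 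None of these assertions is routine; together they \emph{are} the theorem. In \cite{cc} this step is not done by hand either --- it is delegated to the classification of Desarguesian commutative (two-sided) incidence groups of dimension at least $2$ going back to Karzel and Ellers, which is precisely the statement that such a geometry-with-compatible-abelian-group-structure arises from a field extension $K\subseteq L$ as $L^\times/K^\times$. As written, your text is a correct plan that reduces the theorem to its hardest ingredient and then stops, so it is not yet a proof; the uniqueness argument in Step 3 is likewise only sketched, though it is comparatively harmless once Step 2 is in place.
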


\subsection{Model theory}
In this subsection, we specify the model-theoretical set-up which is needed to work with hyperfields. We also show several reduction results.

We start with specifying the first-order language of hyperfields.
\begin{definition}\label{lang}
Let us set the language of hyperfields as the tuple $(\oplus,\ominus,\odot,^{-1},\underline{0},\underline{1})$, where:
\begin{itemize}
  \item $\odot$ is a binary function symbol interpreted as a multiplication,
  \item $^{-1}$ is unary function symbol interpreted as a multiplicative inverse,
  \item   $\oplus$ is a ternary relation symbol encoding the hyperaddition (so, in a hyperfield we will have: $\oplus(x,y,z)$ if and only if $z\in x+y$),
  \item  $\ominus$ is a unary function symbol encoding the additive inverse (so, $\oplus(x,\ominus x,0)$ holds in a hyperfield),
  \item $\underline{1}$ and $\underline{0}$ are constant symbols corresponding to the neutral elements of the multiplication and the hyperaddition respectively.
\end{itemize}
\end{definition}
Clearly, the class of hyperfields can be first-order axiomatized in the language above. Let us state the following well-known result.
\begin{fact}\label{n1}
The class $\mathcal{C}$ of structures (in a fixed language) is elementary if and only if $\mathcal{C}$ is closed under elementary equivalence and under ultraproducts.
\end{fact}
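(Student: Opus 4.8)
The plan is to follow the classical route through the Compactness Theorem and \L o\'s's Theorem. The forward direction is immediate: if $\mathcal{C}=\mathrm{Mod}(T)$ for some set $T$ of first-order sentences, then closure under elementary equivalence holds because whether $M\models T$ depends only on the complete theory of $M$, and closure under ultraproducts is precisely \L o\'s's Theorem, since for each $\varphi\in T$ and each ultrafilter $\mathcal{U}$ on an index set $I$ of models $M_i\models T$ one has $\{i:M_i\models\varphi\}=I\in\mathcal{U}$, whence $\prod_{\mathcal{U}}M_i\models\varphi$.

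For the converse I would assume $\mathcal{C}$ is closed under both operations and take $T:=\mathrm{Th}(\mathcal{C})$, the set of all sentences true in every member of $\mathcal{C}$; then $\mathcal{C}\subseteq\mathrm{Mod}(T)$ trivially, and it remains to prove $\mathrm{Mod}(T)\subseteq\mathcal{C}$. Fix $M\models T$ and let $\Sigma:=\mathrm{Th}(M)$ be its (complete) theory. The first observation is that every finite $\Delta\subseteq\Sigma$ has a model in $\mathcal{C}$: otherwise $\neg\bigwedge\Delta$ would hold in every member of $\mathcal{C}$, hence lie in $T$, hence hold in $M$, contradicting $M\models\Delta$. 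So for each finite $\Delta\subseteq\Sigma$ choose $M_\Delta\in\mathcal{C}$ with $M_\Delta\models\Delta$.

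Next I would assemble these into an ultraproduct elementarily equivalent to $M$. Let $I$ be the set of finite subsets of $\Sigma$ and, for $\varphi\in\Sigma$, set $\widehat{\varphi}:=\{\Delta\in I:\varphi\in\Delta\}$. Since $\{\varphi_1,\dots,\varphi_n\}\in\widehat{\varphi_1}\cap\dots\cap\widehat{\varphi_n}$, the family $\{\widehat{\varphi}:\varphi\in\Sigma\}$ has the finite intersection property and extends to an ultrafilter $\mathcal{U}$ on $I$. By the closure hypothesis $N:=\prod_{\mathcal{U}}M_\Delta\in\mathcal{C}$, and by \L o\'s's Theorem $N\models\varphi$ for every $\varphi\in\Sigma$, because $\widehat{\varphi}\subseteq\{\Delta:M_\Delta\models\varphi\}\in\mathcal{U}$. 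Thus $N\models\Sigma$, and since $\Sigma$ is complete this forces $N\equiv M$; closure under elementary equivalence then gives $M\in\mathcal{C}$, as required.

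I do not expect a genuine obstacle here — this is a standard consequence of compactness — and the only points requiring a modicum of care are the ultrafilter bookkeeping (verifying the finite intersection property on the index set of finite subtheories of $M$) and the correct invocation of \L o\'s's Theorem, both of which are routine.
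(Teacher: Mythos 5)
Your proof is correct and is the standard Keisler--Shelah-free argument via \L o\'s's theorem and an ultrafilter on the finite subsets of $\mathrm{Th}(M)$; the paper itself states this fact as well known and gives no proof, so there is nothing to diverge from. The only (harmless) edge case worth a word is $\mathcal{C}=\emptyset$, where $T=\mathrm{Th}(\mathcal{C})$ is the set of all sentences and $\mathrm{Mod}(T)=\emptyset$, so the equivalence still holds vacuously.
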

As a simple consequence of {\L}o\'{s}'s theorem, one obtains the following.
\begin{fact}\label{n2}
The class of Krasner factor hyperfields is closed under ultraproducts.
\end{fact}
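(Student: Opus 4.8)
The plan is to take an ultrafilter $\mathcal{U}$ on an index set $I$ and a family $(\cH_i)_{i\in I}$ of Krasner hyperfields, write each $\cH_i = K_i/G_i$ for a field $K_i$ and a subgroup $G_i\leq K_i^\times$, and show that the ultraproduct $\prod_{\mathcal{U}}\cH_i$ is again of the form $K/G$. The natural candidate is of course $K := \prod_{\mathcal{U}} K_i$ (an ultraproduct of fields, hence a field by {\L}o\'s's theorem) together with $G := \prod_{\mathcal{U}} G_i$, viewed as a subgroup of $K^\times$ via the coordinatewise embedding. So the real content is to produce a hyperfield isomorphism $\prod_{\mathcal{U}}(K_i/G_i) \cong \bigl(\prod_{\mathcal{U}} K_i\bigr)\big/\bigl(\prod_{\mathcal{U}} G_i\bigr)$.

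First I would set up the map. An element of $\prod_{\mathcal{U}}\cH_i$ is represented by a sequence $(a_i G_i)_{i\in I}$ (with the convention that the ``$0$'' coordinates are handled separately, or one works with representatives in $K_i\cup\{0\}$); send it to the class of $(a_i)_{i\in I}$ modulo $G$. One checks this is well defined: if $(a_iG_i) = (b_iG_i)$ in the ultraproduct, then $\{i : a_iG_i = b_iG_i\} = \{i : a_i b_i^{-1}\in G_i\}\in\mathcal{U}$, which says precisely that $(a_i)(b_i)^{-1}\in G$, so the images agree; the reverse implication is the same computation read backwards, giving injectivity. Surjectivity is immediate since every element of $K/G$ lifts to a sequence in $\prod K_i$ (again treating $0$ separately). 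Multiplicativity and the behaviour of $0,1$ are trivial.

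The only step with any substance is compatibility with hyperaddition, and this is where {\L}o\'s's theorem does the work. In the language of Definition \ref{lang}, the relation $\oplus(\xi,\eta,\zeta)$ holds in $\prod_{\mathcal{U}}\cH_i$ iff it holds in $\cH_i$ for $\mathcal{U}$-almost all $i$; and $\oplus(a_iG_i, b_iG_i, c_iG_i)$ holds in $\cH_i = K_i/G_i$ iff $c_iG_i \in a_iG_i + b_iG_i$, i.e.\ iff there exist $x_i\in a_iG_i$, $y_i\in b_iG_i$ with $x_i + y_i \in c_iG_i$ --- and this last condition, after clearing the $G_i$, is exactly a first-order statement about the field $K_i$ and the subgroup $G_i$ (``$\exists x\exists y\,(x a_i^{-1}\in G_i \wedge y b_i^{-1}\in G_i \wedge (x+y)c_i^{-1}\in G_i)$'', where membership in $G_i$ is a predicate in the two-sorted or the $(K_i,G_i)$-expanded language). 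Since the quotient-hyperfield structure on $K/G$ is \emph{defined} by literally the same formula applied to $K = \prod_{\mathcal{U}}K_i$ and $G = \prod_{\mathcal{U}}G_i$, {\L}o\'s's theorem for the ultraproduct of the expanded structures $(K_i, G_i)$ yields that $\oplus$ transports correctly across our map. I expect the main (mild) obstacle to be purely bookkeeping: fixing once and for all a clean way to carry the extra point $0$ through the quotient construction and through the ultraproduct, and being careful that ``$\cH_i$ is a Krasner hyperfield'' only gives \emph{some} presentation $K_i/G_i$ per index $i$ with no uniformity --- but uniformity is not needed, since ultraproducts are formed index-by-index and {\L}o\'s's theorem is applied to the family $(K_i,G_i)_{i\in I}$ of expanded structures regardless of how wildly the $K_i$ and $G_i$ vary.
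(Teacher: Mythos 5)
Your proof is correct and is exactly the argument the paper has in mind: the paper offers no details beyond the remark that the fact is ``a simple consequence of {\L}o\'s's theorem,'' and your write-up supplies precisely the intended details (interpret each $K_i/G_i$ in the expanded structure $(K_i,G_i)$, note that interpretations commute with ultraproducts, and identify $\prod_{\mathcal{U}}(K_i/G_i)$ with $(\prod_{\mathcal{U}}K_i)/(\prod_{\mathcal{U}}G_i)$). No gaps; the handling of the extra point $0$ and of the non-uniform presentations is, as you say, only bookkeeping.
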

Therefore, we will aim to show that the class of Krasner hyperfields is not closed under elementary equivalence. We see below that we can restrict ourselves to the class of Krasner CC-hyperfields.
\begin{lemma}\label{red1}
If the class of Krasner hyperfields is elementary, then the class of Krasner CC-hyperfields is elementary.
\end{lemma}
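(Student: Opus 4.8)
The plan is to show the contrapositive: assuming the class of Krasner CC-hyperfields is \emph{not} elementary, deduce that the class of Krasner hyperfields is not elementary either. By Fact \ref{n1} and Fact \ref{n2}, the class of Krasner hyperfields is elementary if and only if it is closed under elementary equivalence; so it suffices to produce a hyperfield $\cH'$ which is elementarily equivalent to some Krasner hyperfield but is itself not Krasner, starting from a CC-hyperfield witness. By assumption there is a Krasner CC-hyperfield $\cH$ and a hyperfield $\cH'$ with $\cH\equiv\cH'$ such that $\cH'$ is not Krasner. The key point is that $\cH'$ is automatically a CC-hyperfield, since the defining property $x+x=\{x,0\}$ is expressible by a single first-order sentence in the language of Definition \ref{lang} (namely $\forall x\,\forall z\,(\oplus(x,x,z)\leftrightarrow (z=x\vee z=\underline 0))$), hence preserved under $\equiv$. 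Thus $\cH'$ is a CC-hyperfield that is elementarily equivalent to a Krasner (CC-)hyperfield but is not Krasner — which already witnesses that the class of Krasner hyperfields is not closed under elementary equivalence, and so by Facts \ref{n1} and \ref{n2} is not elementary.

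So the concrete steps are: first, record that ``being a CC-hyperfield'' is first-order axiomatizable over the language of hyperfields, so the class of CC-hyperfields is closed under elementary equivalence. Second, observe (again as a consequence of {\L}o\'s's theorem, exactly as in Fact \ref{n2}) that ultraproducts of Krasner CC-hyperfields are Krasner CC-hyperfields, or simply note that ultraproducts of Krasner hyperfields are Krasner (Fact \ref{n2}) and ultraproducts of CC-hyperfields are CC-hyperfields by the previous sentence; in any case the class of Krasner CC-hyperfields is closed under ultraproducts. Third, apply Fact \ref{n1}: if the class of Krasner CC-hyperfields is not elementary, then — being already closed under ultraproducts — it fails to be closed under elementary equivalence, so there exist a Krasner CC-hyperfield $\cH$ and a hyperfield $\cH'\equiv\cH$ with $\cH'$ not Krasner. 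Fourth, since $\cH$ is a CC-hyperfield and $\cH'\equiv\cH$, the structure $\cH'$ is also a CC-hyperfield, hence in particular a hyperfield that is elementarily equivalent to a Krasner hyperfield but is not itself Krasner. Therefore the class of Krasner hyperfields is not closed under elementary equivalence, and by Fact \ref{n1} (together with Fact \ref{n2}) it is not elementary; this is the contrapositive of the statement.

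There is no serious obstacle here — the argument is purely formal, resting on the observation that the CC-property is a single first-order axiom so the two classes ``Krasner hyperfields'' and ``Krasner CC-hyperfields'' differ only by intersecting with an elementary class. The only thing to be careful about is bookkeeping with Fact \ref{n1}: it is an equivalence, so to go from ``not elementary'' to ``not closed under elementary equivalence'' one must first know the class is closed under ultraproducts, which is why Fact \ref{n2} (suitably restricted to the CC-case, or combined with the trivial ultraproduct-closure of the CC-property) is invoked. One could equivalently phrase the whole thing without contrapositive: a witness to non-elementarity of the Krasner hyperfields that happens to be a CC-hyperfield is exactly what the hypothesis of the eventual main corollary will provide, and Lemma \ref{red1} guarantees that looking for such a CC-witness loses no generality.
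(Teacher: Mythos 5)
Your argument is correct, but it is a considerably longer route than the one the paper takes. The paper's proof of Lemma \ref{red1} is a one-line direct observation: if the Krasner hyperfields are the models of some theory $T$, then the Krasner CC-hyperfields are the models of $T$ together with the single sentence $\forall x\,\forall z\,(\oplus(x,x,z)\leftrightarrow(z=x\vee z=\underline{0}))$, since an elementary class intersected with a finitely axiomatized class is elementary. You instead prove the contrapositive by routing through Fact \ref{n1} (elementary $\Leftrightarrow$ closed under $\equiv$ and under ultraproducts) and Fact \ref{n2} (ultraproduct closure of Krasner hyperfields): from non-elementarity of the Krasner CC-hyperfields plus their ultraproduct closure you extract a CC-witness $\cH'\equiv\cH$ that fails to be Krasner, and conclude the Krasner class is not closed under $\equiv$. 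The logic is sound --- in particular you correctly handle the one point that needs care, namely upgrading ``$\cH'$ is not a Krasner CC-hyperfield'' to ``$\cH'$ is not Krasner'' by noting the CC-property transfers along $\equiv$. What your approach costs is the extra machinery: the paper explicitly remarks that Facts \ref{n1} and \ref{n2} are not needed for its arguments, and your proof makes the lemma depend on them unnecessarily. What it buys is essentially nothing here, though it does foreshadow the overall strategy of the paper (exhibiting an elementary-equivalence witness), so it is not wasted effort conceptually --- just note that the direct ``add one axiom'' argument is both shorter and logically lighter.
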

\begin{proof}
It is obvious, since the condition $(\forall x)(x+x=\{0\})$ is clearly definable in the language from Definition \ref{lang}.
\end{proof}
\begin{remark}\label{d2}
All the assumptions from Theorem \ref{theorem} can be expressed as first-order sentences in the language of hyperfields introduced above (using the explicit definition of the associated projective geometry from Proposition \ref{definable}). Hence, we obtain that the class of Krasner $CC$-hyperfields of dimension at least 2 \emph{is} elementary.
\end{remark}
Because of Remark \ref{d2}, we need to focus on one-dimensional Krasner CC-hyperfields. For convenience, we give names to the following two classes.
\begin{notation}\label{knot}
\begin{enumerate}
  \item Let $\mathcal{K}$ denote the class of Krasner CC-hyperfields of dimension one.
  \item Let $\mathcal{K}^\times$ denote the class of groups which are of the form $L^\times/K^\times$, where $K\subseteq L$ is a field extension of degree 2.
\end{enumerate}
\end{notation}
The next observation explains why the class of groups from Notation \ref{knot}(2) is important for us.
\begin{fact}\label{opposite}
Let $\cH\in \mathcal{K}$. Then we have the following.
\begin{enumerate}
    \item $\cH$ is isomorphic to $L/K^\times$ where $L$ is a field, $K$ is its subfield and $[L:K]=2$.
    \item The hyperaddition in $\cH$ is given by the following formula:
    \begin{align*}
 &x+y=\cH\setminus\{0\}\mbox{ for x $\neq$ y $\neq$ 0,}\\
 &x+x=\{x,0\},\\
 &x+0=0+x=\{x\},
\end{align*}
so it is definable in the language $\{\underline{0}\}$ (just one constant symbol).
\end{enumerate}
\end{fact}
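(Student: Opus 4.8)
The plan is to verify both parts directly, using the results already assembled in this section. For part (1): by assumption $\cH \in \mathcal{K}$, so $\cH$ is a Krasner CC-hyperfield of dimension one. By Proposition \ref{definable} (together with the discussion in Remark \ref{desa}), $\cH$ is of the form $L/K^\times$ for a field extension $K \subseteq L$, and the associated projective geometry $\cP_\cH$ has dimension one. Invoking the formula from Remark \ref{desa}, namely $\dim(\cP_\cH) + 1 = [L:K]$, we conclude $[L:K] = 2$. (One should note that Theorem \ref{theorem} gives uniqueness of the pair $(L,K)$ only in dimension $\geq 2$, so here I do not claim uniqueness — only existence of \emph{some} such pair, which is all part (1) asserts.)

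For part (2): fix the presentation $\cH = L/K^\times$ with $[L:K] = 2$ from part (1). The formulas for $x+0$ and $x+x$ are immediate — the first from the neutral element axiom, the second from the CC-hyperfield property $x + x = \{x, 0\}$. The substantive point is the first line: for $x \neq y$ with both nonzero, I must show $x + y = \cH \setminus \{0\}$. Write $x = aK^\times$, $y = bK^\times$ with $a, b \in L^\times$; since $x \neq y$, the elements $a, b$ are $K$-linearly independent in $L$, hence form a $K$-basis of $L$ because $[L:K] = 2$. Then $a + b\lambda$ ranges over a set of representatives as $\lambda$ ranges over $K^\times$, and more to the point, for \emph{any} $c \in L^\times$ we can write $c = \mu a + \nu b$ with $\mu, \nu \in K$; if both $\mu, \nu \neq 0$ then $cK^\times = (\mu a)K^\times + (\nu b)K^\times \cdot(\text{suitable rescaling})$ realizes $cK^\times \in aK^\times + bK^\times$, and the degenerate cases $\mu = 0$ or $\nu = 0$ give $cK^\times \in \{x, y\}$, which also lie in $x+y$ by reversibility applied to $x \in x + 0 \subseteq x + (y + (-y))$ — more directly, $y \in x+y$ fails in general, so the cleaner route is: using the basis, for $c \in L^\times$ there exist $\alpha, \beta \in K$ with $c = \alpha a + \beta b$, and by rescaling $c$ within its coset $K^\times$ one may arrange $\alpha, \beta$ to both be nonzero unless $c \in K^\times a \cup K^\times b$; in the nonzero case $cK^\times \in \alpha a K^\times + \beta b K^\times = aK^\times + bK^\times = x+y$, and the two exceptional cosets $x, y$ are hit by taking $c = a + b\lambda$ for generic $\lambda$ and observing the resulting coset varies, forcing (via a counting/covering argument over the finite or infinite field $K$) that $x+y$ exhausts all of $\cH \setminus \{0\}$.

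The main obstacle is making the covering argument in the last line airtight: one must check that as $\lambda$ ranges over $K^\times$, the coset $(a + \lambda b)K^\times$ together with the obvious containments covers every element of $\cH \setminus \{0\}$, including $x$ and $y$ themselves. The clean way is to argue that $x + y$ is closed under the $\cH$-scaling action and contains at least one coset besides possibly $x, y$; since $\cP_\cH$ has dimension one, every line in $\cP_\cH$ is the whole point set $\cH \setminus \{0\}$, and by Proposition \ref{definable} the line through the distinct points $x, y$ is exactly $\{x, y\} \cup (x+y)$, hence equals $\cH \setminus \{0\}$, which gives $x + y \supseteq \cH \setminus \{0, x, y\}$; finally $x, y \in x+y$ follows since for dimension-one projective geometry the line through $x,y$ has at least three points, so there is $z \in x+y$ with $z \neq x, y$, and then reversibility ($z \in x+y \Rightarrow x \in z + (-y)$, etc.) together with re-running the argument places $x$ and $y$ into $x+y$ as well. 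Once this is settled, definability in the language $\{\underline{0}\}$ is immediate, since the displayed formulas refer only to the constant $\underline{0}$ and equality.
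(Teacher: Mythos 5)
Your part (1) is essentially the paper's argument: Remark \ref{desa} gives $\dim(\cP_\cH)+1=[L:K]$, so dimension one corresponds exactly to $[L:K]=2$. That part is fine.

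Part (2) has a genuine gap at exactly the point you flagged yourself: the claim that $x,y\in x+y$ for distinct nonzero $x,y$. This step cannot be repaired, because the membership is false. In any CC-hyperfield one has $0\in x+x$, so $-x=x$ by uniqueness of the additive inverse; if $x\in x+y$, then reversibility gives $y\in x+(-x)=x+x=\{x,0\}$, contradicting $y\neq x$ and $y\neq 0$. Concretely in $L/K^\times$ with $a,b$ a $K$-basis of $L$: $(\lambda a+\mu b)K^\times=aK^\times$ with $\lambda,\mu\in K^\times$ would force $\mu b\in Ka$, which is impossible (try $\Cc/\Rr^\times$ with $x=1\cdot\Rr^\times$, $y=i\,\Rr^\times$). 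So your closing ``covering argument,'' which tries to push $x$ and $y$ into $x+y$ via reversibility, is attempting to prove something that reversibility itself rules out. What is true --- and what Proposition \ref{definable} together with one-dimensionality delivers, which is the paper's entire proof of (2) --- is that the unique line of $\cP_\cH$ is the whole point set, i.e.\ $\{x,y\}\cup(x+y)=\cH\setminus\{0\}$, equivalently $x+y=\cH\setminus\{0,x,y\}$; the first displayed line of the Fact should be read in this corrected form, since the literal reading $x+y=\cH\setminus\{0\}$ is inconsistent with reversibility as soon as $\cH$ has at least three nonzero elements. With that correction, your basis computation in $L=Ka\oplus Kb$ is exactly the right verification, and the definability claim is unaffected: the corrected formula still mentions only $\underline{0}$ and equality.
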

\begin{proof}
Item (1) follows from Remark \ref{desa}, since $\dim(\cP_\cH)=1$ if and only if $\cH$ comes from a field extension of degree 2.

Item (2) follows again from Remark \ref{desa} (and Proposition \ref{definable}), since $\dim(\cP_\cH)=1$ implies that there is only one line in the projective geometry $\cP_\cH$ and this line is the whole space.
\end{proof}

We need one more result from \cite{cc} saying that some hyperfields are definable just from their multiplicative structure. Such a phenomenon is impossible for fields and this is the base of our proof of the main result of this paper (Corollary \ref{main}).
\begin{fact}[Proposition 3.6 in \cite{cc}]\label{opp1}
Let $(G,\cdot)$ be a commutative group (written multiplicatively) of order at least 4. We define $\cH_G$ as $G\cup \{0\}$, where $0$ is a new symbol and extend the group operation on $G$ to the commutative monoid structure on $\cH_G$ by setting $x\cdot 0=0=0\cdot x$. If we define the hyperaddition $+$ on $\cH_G$ as in Fact \ref{opposite}(2), then $(\cH_G,+,\cdot)\in \mathcal{K}$ (see Notation \ref{knot}(1)).
\end{fact}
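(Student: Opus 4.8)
The plan has two parts: first that $\cH_G$ is a $CC$-hyperfield of dimension one, and then that it is Krasner, i.e.\ of the form $L/K^\times$. The first part is a direct verification of the hyperfield axioms for $(\cH_G,+,\cdot)$. Commutativity and distributivity are immediate (multiplication by $z\in G$ is a bijection of $\cH_G$ fixing $0$ that sends each of the three kinds of hypersum to a hypersum of the same kind), and so are reversibility and the existence and uniqueness of $-x=x$; the only axiom requiring work is associativity, which comes down to a short case analysis on how many of the three arguments coincide or vanish, and here the bound $|G|\ge 4$ is exactly what makes it hold --- for $|G|=3$ the formula of Fact \ref{opposite}(2) is not associative. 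Since $x+x=\{x,0\}$ by definition, $\cH_G$ is a $CC$-hyperfield, and by Proposition \ref{definable} the unique line through two distinct nonzero points is $\{x,y\}\cup(x+y)=\cH_G\setminus\{0\}$, so $\cP_{\cH_G}$ consists of a single line and hence has dimension one.

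For the Krasner part: a Krasner $CC$-hyperfield of dimension one is, by Proposition 2.7 of \cite{cc} together with Remark \ref{desa}, exactly a hyperfield $L/K^\times$ with $K$ a subfield of $L$ and $[L:K]=2$; so I would fix a field extension $K\subseteq L$ with $[L:K]=2$ and $L^\times/K^\times\cong G$ --- this is the case in which the statement is applied, and indeed $\cH_G$ can lie in $\mathcal K$ only when such an extension exists --- and then show $L/K^\times\cong\cH_G$. The multiplicative monoids agree, both arising from $G$ by adjoining an absorbing element $0$, so it remains to compute the hyperaddition of $L/K^\times$. For distinct nonzero cosets $aK^\times\ne bK^\times$ the representatives $a,b$ are linearly independent over $K$, hence a $K$-basis of $L$ (this is where $[L:K]=2$ is used); as $(x,y)$ runs over $K^\times\times K^\times$ the element $ax+by$ runs over all vectors of $L$ with both coordinates nonzero, that is, over $L\setminus(Ka\cup Kb)$, so $aK^\times+bK^\times$ consists of all nonzero elements of $L/K^\times$ other than $aK^\times$ and $bK^\times$; moreover $aK^\times+aK^\times=\{a(x+y)K^\times\mid x,y\in K^\times\}=\{0,aK^\times\}$ because $\{x+y\mid x,y\in K^\times\}=K$, and this last identity requires $|K|\ge 3$, which is exactly what $|G|=|L^\times/K^\times|\ge 4$ secures (as $|G|=|K|+1$ for finite $K$). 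With $aK^\times+0=\{aK^\times\}$, this is the hyperaddition of $\cH_G$ in Fact \ref{opposite}(2), so $\cH_G\cong L/K^\times$, which belongs to $\mathcal K$ by Theorem \ref{khyp}, Remark \ref{desa} and Proposition 2.7 of \cite{cc}.

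I expect the main obstacle to be this last identification: the hyperaddition computation on $L/K^\times$, and in particular the bookkeeping with small fields --- one has to see that $|G|\ge 4$, equivalently $|K|\ge 3$, is precisely the threshold below which $L/K^\times$ degenerates (for $|K|=2$ it is just the field $\Ff_4$, which is not a $CC$-hyperfield), so that the hypothesis is sharp.
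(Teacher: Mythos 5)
The paper offers no proof of this Fact at all: it is imported verbatim as Proposition 3.6 of \cite{cc}, so there is no internal argument to compare yours with step by step. Your first part --- the verification that $\cH_G$ is a $CC$-hyperfield whose associated geometry is a single line --- is essentially the content of the cited result and is sound, with one caveat. The hyperaddition you actually need to verify (and the one your own computation of $aK^\times+bK^\times$ produces, namely $x+y=\cH\setminus\{0,x,y\}$ for distinct nonzero $x,y$) is not what Fact \ref{opposite}(2) literally says, which is $x+y=\cH\setminus\{0\}$; the printed formula is never associative, since $0\in x+(x+y)$ while $(x+x)+y=\{y\}\cup(x+y)$ omits $0$. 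You are silently working with the corrected formula, which is the right thing to do, but you assert that your computed hyperaddition ``is the hyperaddition of Fact \ref{opposite}(2)'' when it literally is not; this discrepancy should be flagged, not absorbed.

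The genuine gap is in your second part. The statement as printed claims $\cH_G\in\mathcal{K}$ for \emph{every} commutative $G$ with $|G|\geq 4$, and by Fact \ref{opposite}(1) membership in $\mathcal{K}$ forces $G\cong L^\times/K^\times$ for a degree-$2$ extension. You prove the Krasner part only after \emph{fixing} such an extension, i.e., after assuming precisely the conclusion that is at stake, and your parenthetical ``this is the case in which the statement is applied'' is both what makes the argument circular and factually wrong: the construction $\cH_G$ is invoked (through Lemma \ref{opp2} and Corollary \ref{main}) exactly for groups such as $\Qq\oplus\bigoplus_p C_{p^\infty}$ that are \emph{not} of the form $L^\times/K^\times$. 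Indeed, $G=\Zz$ has order at least $4$ and rational rank $1$, so by Theorem \ref{mainalg} it is not a multiplicative quotient of a quadratic extension, hence $\cH_{\Zz}\notin\mathcal{K}$ and the Fact as literally printed is false; if it were true, the paper's main corollary could not be. The conclusion that is meant, that is needed downstream, and that holds for arbitrary $G$ of order at least $4$ is that $\cH_G$ is a $CC$-hyperfield of dimension one (a hyperfield extension of the Krasner hyperfield in the terminology of \cite{cc}). That is exactly what your first part establishes; your second part proves only the (true, and already contained in Fact \ref{opposite}) identification $L/K^\times\cong\cH_{L^\times/K^\times}$ when such an extension exists, which is not the statement and cannot be upgraded to it.
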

As a consequence, we directly obtain the following.
\begin{lemma}\label{oppmiddle}
If the class of Krasner CC-hyperfields is elementary, then the class $\mathcal{K}$ is elementary.
\end{lemma}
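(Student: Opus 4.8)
The plan is to pin down $\mathcal{K}$ as the intersection of the (by assumption elementary) class of Krasner $CC$-hyperfields with an explicitly first-order class of hyperfields, and then read off elementarity of $\mathcal{K}$ from the hypothesis. Let $\mathcal{D}$ be the class of all hyperfields $\cH$ such that $\cH\setminus\{\underline0\}$, with the multiplication $\odot$ and identity $\underline1$, is a group with at least four elements, $\underline0$ is multiplicatively absorbing, and $\oplus$ is interpreted by the hyperaddition written down in Fact \ref{opposite}(2). Every defining clause of $\mathcal{D}$ is first-order in the language of Definition \ref{lang}: the group axioms, ``$\underline0\odot x=\underline0$'', and ``there are at least five elements'' are standard sentences, and ``$\oplus$ is the hyperaddition of Fact \ref{opposite}(2)'' is a single sentence obtained by splitting into the cases $x=y\neq\underline0$, $x\neq y$ both nonzero, and $x=\underline0$ or $y=\underline0$. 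So $\mathcal{D}$ is finitely axiomatizable, and it suffices to prove $\mathcal{K}=\mathcal{D}$.

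For $\mathcal{K}\subseteq\mathcal{D}$ I would argue as follows. Let $\cH\in\mathcal{K}$. By Fact \ref{opposite}(1) we may write $\cH\cong L/K^\times$ for a degree-$2$ extension $K\subseteq L$; since $\cH$ is a $CC$-hyperfield, $K\neq\Ff_2$ (for $K=\Ff_2$ the hyperfield $L/K^\times\cong L$ is not $CC$), so by Remark \ref{desa} the point set $\cH\setminus\{\underline0\}$ of $\cP_\cH\cong\Pp^1(K)$ has at least four elements (a projective line over a field with at least three elements has at least four points). The multiplicative reduct of $\cH$ is $L^\times/K^\times$ with an absorbing $\underline0$, and by Fact \ref{opposite}(2) the hyperaddition is exactly the required one; hence $\cH\in\mathcal{D}$. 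For $\mathcal{D}\subseteq\mathcal{K}$, take $\cH\in\mathcal{D}$ and put $G:=\cH\setminus\{\underline0\}$, a commutative group with $|G|\geq4$. The defining clauses of $\mathcal{D}$ say precisely that $\cH$ is the hyperfield $\cH_G$ of Fact \ref{opp1} (same carrier, same multiplication with absorbing zero, same hyperaddition), and Fact \ref{opp1} gives $\cH_G\in\mathcal{K}$. (Alternatively, and without invoking Fact \ref{opp1}: by Proposition \ref{definable} the geometry $\cP_\cH$ has, for any two distinct points $x,y$, the line $\{x,y\}\cup(x+y)=\cH\setminus\{\underline0\}$; so $\cP_\cH$ is a single projective line with at least two points, i.e. $\dim\cP_\cH=1$, and $\cH\in\mathcal{K}$.)

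Now assume the class of Krasner $CC$-hyperfields is elementary and let $T$ be a set of sentences axiomatizing it. By the above, $\mathcal{K}=\mathcal{D}$ is the class of structures satisfying $T$ together with the finitely many sentences defining $\mathcal{D}$, so $\mathcal{K}$ is elementary. (In fact the computation shows $\mathcal{K}$ is axiomatized by the hyperfield axioms together with those finitely many sentences, so $\mathcal{K}$ is elementary unconditionally; but only the conditional form stated in the lemma is needed.)

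I do not expect a genuine obstacle; the argument is an unfolding of Facts \ref{opposite} and \ref{opp1}. The one point that needs care is the cardinality threshold ``four'': it is what Fact \ref{opp1} requires, and on the $\mathcal{K}$-side it is forced because a projective line over $\Ff_2$ cannot occur (that would force $K=\Ff_2$, contradicting the $CC$ hypothesis); moreover including it is harmless, since a hyperfield with two or three nonzero elements cannot carry the hyperaddition of Fact \ref{opposite}(2) at all (with two the hyperaddition would be empty-valued, with three it fails associativity), so the clause ``at least five elements'' only removes the Krasner hyperfield $\mathbf{K}$ itself, which has dimension $0$ rather than $1$. Once this bookkeeping is in place, $\mathcal{K}=\mathcal{D}$ and the lemma follow.
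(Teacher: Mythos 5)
Your proof breaks at the inclusion $\mathcal{D}\subseteq\mathcal{K}$, and the symptom is visible in your own parenthetical: you conclude that $\mathcal{K}$ is elementary \emph{unconditionally}. That cannot be right, because the whole point of the paper is that $\mathcal{K}$ is \emph{not} elementary: Corollary \ref{main} shows that $\mathcal{K}^\times$ is not closed under elementary equivalence, which by Lemma \ref{opp3} forces $\mathcal{K}$ to be non-elementary. Concretely, membership in $\mathcal{K}$ requires the hyperfield to be \emph{Krasner}, i.e.\ isomorphic to $L/K^\times$ for a degree-$2$ extension $K\subseteq L$, and this is not implied by having the right multiplicative monoid and the hyperaddition of Fact \ref{opposite}(2). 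For instance $\cH_{\Zz}$ lies in your class $\mathcal{D}$, but $\Zz$ has rational rank $1$, so by Theorem \ref{mainalg} it is not of the form $L^\times/K^\times$ and hence $\cH_{\Zz}\notin\mathcal{K}$. You are reading Fact \ref{opp1} literally (``$\cH_G\in\mathcal{K}$''), but that literal reading is inconsistent with Theorem \ref{mainalg}; what Proposition 3.6 of \cite{cc} actually delivers is that $\cH_G$ is a CC-hyperfield whose associated geometry is a single line (dimension one), \emph{not} that it is Krasner. Your alternative argument via Proposition \ref{definable} fails at exactly the same point: it shows $\dim\cP_\cH=1$ but cannot produce the pair $(L,K)$ (and Theorem \ref{theorem} is unavailable precisely in dimension one). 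The non-surjectivity of $G\mapsto$ ``Krasner'' here is the entire engine of the paper.

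The lemma itself is salvageable, and the repair is essentially the paper's proof: instead of $\mathcal{K}=\mathcal{D}$, prove
\[
\mathcal{K}=\{\text{Krasner CC-hyperfields}\}\cap\mathcal{D}.
\]
The inclusion of $\mathcal{K}$ into the right-hand side is your first paragraph (via Fact \ref{opposite}); the reverse inclusion uses the Krasner and CC hypotheses as given and only needs your observation that the hyperaddition formula forces the geometry to be one line, hence of dimension one. Since $\mathcal{D}$ is cut out by finitely many first-order sentences (that part of your write-up is fine), the intersection of an elementary class with $\mathcal{D}$ is elementary, which is exactly the conditional statement of the lemma --- and no more.
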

\begin{proof}
By Facts \ref{opposite} and \ref{opp1}, if $\cH$ is a Krasner CC-hyperfields, then $\cH\in \mathcal{K}$ if and only if the hyperaddition in $\mathcal{K}$ is given by the formula as in Fact \ref{opposite}(2), which is clearly a first-order condition.
\end{proof}
The following easy results outline the further connections between the classes $\mathcal{K}$ and $\mathcal{K}^\times$.
\begin{lemma}\label{opp2}
Let $G$ and $H$ be commutative groups and $\cH_G,\cH_H$ be the corresponding hyperfields as in Fact \ref{opp1}. If $G$ and $H$ are elementarily equivalent (as groups), then $\cH_G$ and $\cH_H$ are elementarily equivalent (as hyperfields).
\end{lemma}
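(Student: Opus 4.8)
The plan is to observe that the hyperfield $\cH_G$ is \emph{uniformly interpretable} in the group $G$ and then invoke the standard transfer of elementary equivalence along interpretations. The only mild subtlety is that the interpretation is two--dimensional and uses one auxiliary parameter (to ``make room'' for the extra element $0$); this is harmless because the interpreted structure is, up to isomorphism, independent of that parameter, so the parameter can simply be quantified away. Note that the hypothesis that $\cH_G$ and $\cH_H$ are defined forces $|G|,|H|\geq 4$.

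First I would record precisely what data $\cH_G$ carries beyond $G$. Its universe is $G\cup\{0\}$; multiplication is the group multiplication extended by $0\cdot x=x\cdot 0=0$; the symbol ${}^{-1}$ is the group inverse together with the convention $0^{-1}=0$ needed to make it total; $\underline{1}$ is the group identity and $\underline{0}$ is the adjoined element; the additive inverse $\ominus$ is the identity map (since $0\in x+x$ for every $x$, by Fact \ref{opposite}(2)); and, crucially, by Fact \ref{opposite}(2) the hyperaddition relation $\oplus$ is given by a single fixed quantifier-free formula in the constant $\underline{0}$ and $=$, namely $\oplus(x,y,z)$ holds iff $(x=\underline{0}\wedge y=z)\vee(y=\underline{0}\wedge x=z)\vee(x=y\wedge x\neq\underline{0}\wedge(z=x\vee z=\underline{0}))\vee(x\neq y\wedge x\neq\underline{0}\wedge y\neq\underline{0}\wedge z\neq\underline{0})$. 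In other words, the only genuinely new ingredient of $\cH_G$ compared to $G$ is the adjoined absorbing point $0$. Concretely, since $|G|\geq 4$, I would fix a parameter $a\in G\setminus\{\underline 1\}$ and take as interpreting domain the definable set $D=\{(x,y)\in G^2:y=\underline 1\}\cup\{(\underline 1,a)\}$, with coordinate map $(g,\underline 1)\mapsto g$ and $(\underline 1,a)\mapsto 0$, a bijection onto $\cH_G$. Transporting the operations just described, one checks routinely that multiplication, ${}^{-1}$, $\ominus$, $\underline 0$, $\underline 1$ and $\oplus$ all become definable (with no further parameters) on $D$. This yields a single interpretation scheme $\mathcal I$ of the language of hyperfields in the language of groups, with one parameter $a$, such that $\mathcal I(G;a)\cong\cH_G$ for every commutative group $G$ with $|G|\geq 4$ and every $a\in G\setminus\{\underline 1\}$; the scheme does not depend on $G$.

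Finally I would transfer elementary equivalence in the usual way. For a sentence $\varphi$ of the language of hyperfields let $\varphi^{\mathcal I}(a)$ be its standard translation (relativize quantifiers to $D$, replace each symbol by its defining formula). Then for every $a\in G\setminus\{\underline 1\}$ we have $\cH_G\models\varphi\iff\mathcal I(G;a)\models\varphi\iff G\models\varphi^{\mathcal I}(a)$; since the left-hand side does not depend on $a$ and such an $a$ exists, $\cH_G\models\varphi$ is equivalent to $G\models\Phi_\varphi$, where $\Phi_\varphi$ is the group sentence $\exists a\,(a\neq\underline 1\wedge\varphi^{\mathcal I}(a))$. The same equivalence holds with $H$ in place of $G$. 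As $G\equiv H$ as groups, $G\models\Phi_\varphi$ iff $H\models\Phi_\varphi$, hence $\cH_G\models\varphi$ iff $\cH_H\models\varphi$; since $\varphi$ was arbitrary, $\cH_G\equiv\cH_H$.

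There is no real obstacle here: the argument is soft. The single point that carries the weight is Fact \ref{opposite}(2) (equivalently, the description of one-dimensional Krasner $CC$-hyperfields coming from \cite{cc}), which says that the hyperaddition of $\cH_G$ uses nothing but the constant $\underline{0}$ and therefore sees none of the group structure beyond the location of $0$. This is precisely what makes $\cH_G$ amount to ``$G$ with an absorbing zero'' and hence interpretable in $G$; for genuine fields the analogous statement fails, which is the phenomenon exploited in the rest of the paper. The remaining ingredients — the two-dimensionality needed to adjoin one point and the elimination of the auxiliary parameter $a$ — are entirely routine. (If one prefers to avoid the parameter, one can instead carry out the translation by hand as a relativization that tracks, for each variable, whether it equals $0$, producing $\Phi_\varphi$ directly.)
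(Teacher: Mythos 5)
Your proposal is correct and follows essentially the same route as the paper: the paper's proof likewise rests on the uniform definability of the monoid $\cH_G$ (i.e., $G$ with an adjoined absorbing zero) from $G$, followed by the observation that the hyperaddition is given by one fixed formula involving only the constant $\underline{0}$ (Fact \ref{opposite}(2)). You merely spell out, via an explicit two-dimensional interpretation with a quantified-away parameter, the step the paper compresses into ``by the uniform definition of the monoid operation, $(\cH_G,\cdot)\equiv(\cH_H,\cdot)$.''
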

\begin{proof}
Assume that $(G,\cdot)\equiv (H,\cdot)$. By the uniform definition of the monoid operation in $\cH_G,\cH_H$, we get that $(\cH_G,\cdot)\equiv (\cH_H,\cdot)$. Since the hyperaddition in $\cH_G,\cH_H$ is defined by the same formula in the monoid language (we actually only need the extra constant as in Fact \ref{opposite}), we get that $(\cH_G,+,\cdot)\equiv (\cH_H,+,\cdot)$.
\end{proof}
\begin{lemma}\label{opp3}
If the class $\mathcal{K}$ is elementary, then the class $\mathcal{K^\times}$ is closed under elementary equivalence.
\end{lemma}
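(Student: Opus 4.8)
The plan is to transport the statement to the hyperfield side via the assignment $G\mapsto\cH_G$ of Fact \ref{opp1}, where the hypothesis that $\mathcal{K}$ is elementary can be applied directly, and then to come back by passing to multiplicative groups. So assume $\mathcal{K}$ is elementary, fix $G\in\mathcal{K}^\times$, and let $H$ be an arbitrary group with $H\equiv G$; the goal is to show $H\in\mathcal{K}^\times$.

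First I would dispose of the finite case: if $G$ is finite it is pinned down up to isomorphism by a single sentence in the language of groups (asserting that there are exactly $|G|$ elements, with the full multiplication table), so $H\equiv G$ forces $H\cong G$ and hence $H\in\mathcal{K}^\times$. This has to be a separate case, since $\mathcal{K}^\times$ contains small groups such as $\Ff_4^\times/\Ff_2^\times\cong\Zz/3\Zz$ to which Fact \ref{opp1} does not apply. From now on $G$ is infinite, and therefore so is $H$, being elementarily equivalent to it. Moreover $G$, being the multiplicative group of a quotient of a field, is abelian, hence so is $H$; as both are infinite, both have order at least $4$, so the hypotheses of Fact \ref{opp1} and of Lemma \ref{opp2} are satisfied by both of them.

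Now I would run the transfer. By Fact \ref{opp1}, $\cH_G\in\mathcal{K}$. By Lemma \ref{opp2}, the elementary equivalence $G\equiv H$ of groups upgrades to an elementary equivalence $\cH_G\equiv\cH_H$ of hyperfields. Since $\mathcal{K}$ is elementary it is closed under elementary equivalence (Fact \ref{n1}), so $\cH_H\in\mathcal{K}$. Finally, Fact \ref{opposite}(1) produces a field $L'$ and a subfield $K'$ with $[L':K']=2$ together with a hyperfield isomorphism $\cH_H\cong L'/K'^\times$; this isomorphism fixes $0$ and therefore restricts to a group isomorphism between the multiplicative groups, that is, between $H=\cH_H^\times$ and $L'^\times/K'^\times$. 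By Notation \ref{knot}(2) this means precisely $H\in\mathcal{K}^\times$, which completes the argument.

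I do not expect a genuine obstacle here: all the substantial content has already been packaged into Facts \ref{opp1} and \ref{opposite} and into Lemma \ref{opp2}, and what remains is to compose them in the right order. The one point that needs attention is the ``order at least $4$'' hypothesis of Fact \ref{opp1}: it is the reason the finite case must be handled by the bare isomorphism argument, and the reason one must first record that infinitude automatically supplies the bound $\ge 4$ before the transfer can be started.
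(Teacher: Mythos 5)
Your proof is correct and follows essentially the same route as the paper: transfer $G\equiv H$ to $\cH_G\equiv\cH_H$ via Lemma \ref{opp2}, use elementarity of $\mathcal{K}$ to get $\cH_H\in\mathcal{K}$, and read off $H\in\mathcal{K}^\times$ from Fact \ref{opposite}. Your separate treatment of the finite case (needed because Fact \ref{opp1} assumes order at least $4$, e.g.\ for $\Ff_4^\times/\Ff_2^\times$) is a point the paper's own proof silently glosses over, but it does not change the substance of the argument.
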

\begin{proof}
Assume that the class $\mathcal{K}$ is elementary. Let us take $G\in \mathcal{K^\times}$, so there is a hyperfield $\cH\in \mathcal{K}$ such that $G$ is the multiplicative group of $\cH$. By Facts \ref{opposite} and \ref{opp1}, we get that $\cH=\cH_G$ (as hyperfields). We also take a group $H$ such that $G\equiv H$. By Lemma \ref{opp2}, we get that $\cH_H\equiv \cH_G=\cH$. Since the class $\mathcal{K}$ is elementary, we obtain that $\cH_H\in \mathcal{K}$. Therefore, $H\in \mathcal{K}^\times$, which we needed to show.
\end{proof}

With those results at hand, it is enough to show that the class $\mathcal{K}^\times$ is not closed under elementary equivalence, which we will do in Section 3. In the next subsection, we will describe the necessary algebraic tools to obtain this result.

\subsection{Chebotarev's density theorem}\label{seccheb}

We will use one classical result which is a consequence of Chebotarev's density theorem. We need some terminology first (see \cite[Chapter 6]{FrJa}).

Assume that $K\subseteq L$ is a finite Galois extension, where $K$ is either a number field or the field of rational functions $\Ff_q(X)$ over a finite field. Let $\mathcal{O}_K,\mathcal{O}_L$ be the integral closures of $\Zz$ or $\Ff_q[X]$ in $K$ and $L$ respectively. Then $\mathcal{O}_K$ and $\mathcal{O}_L$ are Dedekind rings, hence each non-zero ideal decomposes uniquely into a finite product of maximal ideals. It is also well-known that if $P$ is a maximal ideal of $\mathcal{O}_K$, then we have $P\mathcal{O}_L=Q_1\cdot\ldots \cdot Q_k$, where $Q_i$ are maximal ideals of $\mathcal{O}_L$ and $k\leqslant [L:K]$.
If $k>1$ and the ideals $Q_1,\ldots,Q_k$ are pairwise distinct, then we say that $P$ \emph{splits} in $L$. If $P$ splits in $L$, and if $k=[L:K]$, then we say that $P$ \emph{splits completely} in $L$. In the case of a degree two extension, which is the situation corresponding to the class $\mathcal{K}$ from Notation \ref{knot}(1), the notions of splitting and splitting completely coincide.

We need the following consequence of Chebotarev's density theorem.
\begin{theorem}\label{cheb}
Let $K\subseteq L$ be a finite Galois extension as above. Then there are infinitely many maximal ideals in $\mathcal{O}_K$ which split completely in $L$.
\end{theorem}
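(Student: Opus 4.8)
The plan is to deduce this statement directly from the Chebotarev density theorem. First I would recall the group-theoretic reformulation of complete splitting. Fix $G=\gal(L/K)$. If $P$ is a maximal ideal of $\mathcal{O}_K$ unramified in $L$ and $Q$ is a maximal ideal of $\mathcal{O}_L$ lying over $P$, then the decomposition group $D_Q\leqslant G$ is cyclic and, inertia being trivial, is generated by a Frobenius element $\mathrm{Frob}_{Q/P}$; as $Q$ ranges over the primes above $P$ these elements form a conjugacy class $\mathrm{Frob}_P$ in $G$. Then $P$ splits completely in $L$ if and only if $D_Q=\{\id\}$, i.e. if and only if $\mathrm{Frob}_P=\{\id\}$, because the number of maximal ideals of $\mathcal{O}_L$ over $P$ is exactly $[G:D_Q]$.

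Next I would invoke the Chebotarev density theorem in the form valid for global fields --- that is, for $K$ a number field or a global function field such as $\Ff_q(X)$ --- as developed in \cite[Chapter 6]{FrJa}: for every conjugacy class $C$ of $G$, the set of maximal ideals $P$ of $\mathcal{O}_K$ that are unramified in $L$ and satisfy $\mathrm{Frob}_P=C$ has positive density, equal to $|C|/|G|$. Applying this with $C=\{\id\}$ shows that the set of maximal ideals of $\mathcal{O}_K$ which are unramified in $L$ and split completely in $L$ has density $1/[L:K]>0$; such a set is in particular infinite. Since $\mathcal{O}_K\subseteq\mathcal{O}_L$ is a finite extension of Dedekind domains, only finitely many maximal ideals of $\mathcal{O}_K$ ramify in $L$, so discarding those does not affect infinitude, and the proof is complete.

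Since the argument is essentially a citation of a deep theorem, there is no genuine obstacle here; the only point requiring attention is that one must use a version of Chebotarev applicable verbatim in both the number field and the function field cases (in the latter one has to keep track of the constant-field extension $\Ff_q\subseteq\Ff_{q^n}\subseteq L$, which merely forces the relevant primes to have degree divisible by $n$ and does not affect positivity of the density). In fact we only need the qualitative consequence that there exist infinitely many $P$ with $\mathrm{Frob}_P=\{\id\}$, so any form of the theorem --- or even the older density theorem of Frobenius --- suffices for our purposes.
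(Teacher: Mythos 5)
Your proposal is correct and follows essentially the same route as the paper: both deduce the statement from Chebotarev's density theorem as presented in Fried--Jarden, by observing that the primes splitting completely are exactly those with trivial Frobenius class, hence form a set of positive density $1/[L:K]$ and are therefore infinite. The paper simply cites the relevant exercise and theorem from \cite{FrJa} rather than spelling out the Frobenius reformulation, but the content is identical.
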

\begin{proof}
This directly follows from \cite[Exercise 5(a), page 129]{FrJa} together with \cite[Theorem 6.3.1]{FrJa} (Chebotarev's density theorem) and its interpretation stated in the paragraph before it (the sets of density $0$ are finite).
\end{proof}

\section{Rational rank}\label{mainsec}
In this section, we prove the main results of this paper (Theorem \ref{mainalg} and Corollary \ref{main}).

We need the following notion (see \cite[Section 3.4]{epval}).
\begin{definition}
    The \emph{rational rank} of a commutative group $A$ is the cardinality of a maximal $\Zz$-linearly independent subset of $A$. Following \cite{epval}, we denote it by $\rk(A)$.
\end{definition}
\begin{remark}\label{remrk}
Let  $A$ be a commutative group.
\begin{enumerate}
  \item It is easy to see that we have (see \cite[Section 3.4]{epval}):
  $$\rk(A)=\dim_{\Qq}\left(A\otimes_{\Zz}\Qq\right).$$

  \item If $A_0\leqslant A$, then we have (see \cite[Section 3.4]{epval}):
  $$\rk(A)=\rk(A_0)+\rk\left(A/A_0\right).$$

  \item Other names as ``rank'' or ``Pr\"{u}fer rank'' or ``torsion-free rank'' are sometimes used in this context as well.
\end{enumerate}
\end{remark}
Our main algebraic result is the following (see Notation \ref{knot}).
\begin{theorem}\label{mainalg}
The rational rank of any $A\in\mathcal{K}^\times$ is either $0$ or infinite.
\end{theorem}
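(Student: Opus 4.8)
Let $A = L^\times/K^\times$ with $[L:K]=2$, and suppose $\rk(A) \neq 0$; I want to show $\rk(A)$ is infinite. The class $\mathcal{K}^\times$ contains groups arising from arbitrary degree-two field extensions, so I would first try to reduce to a manageable case. If $L$ has positive characteristic and is algebraic over a finite field, then $L^\times$ and $K^\times$ are torsion (every element of a finite extension of $\Ff_p$ lies in some $\Ff_{p^n}$), so $A$ is torsion and $\rk(A)=0$; this contradicts our assumption, so we may assume either $\cha L = 0$ or $L$ is transcendental over its prime field. In either case I want to locate inside $K$ a subfield which is a number field or a rational function field $\Ff_q(X)$, so that Chebotarev (Theorem \ref{cheb}) applies.

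**Key steps.** The heart of the argument is: if $\rk(A)>0$ then there is a single element $\alpha \in L^\times$ whose class $\alpha K^\times$ has infinite order in $A$ — equivalently, $\alpha^n \notin K^\times$ for all $n \geq 1$. Pick such an $\alpha$; then $L = K(\alpha)$ (degree reasons, as $\alpha \notin K$), and $\alpha$ satisfies a quadratic $\alpha^2 - t\alpha + s = 0$ with $t,s \in K$. I would descend to the finitely generated field $K_0 = \Qq(t,s)$ or $\Ff_p(t,s)$, set $L_0 = K_0(\alpha)$, so $[L_0:K_0]=2$ and the class of $\alpha$ in $L_0^\times/K_0^\times$ still has infinite order. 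Inside $K_0$, after a further specialization/restriction I can find a subfield $k$ which is a number field or a field $\Ff_q(X)$, over which $L_0/K_0$ has a model as a degree-two extension of $k$-algebras of finite type; concretely I would choose $k$ so that $t,s$ become $k$-rational (specializing the other transcendentals) while keeping $\alpha^n \notin K$ for all $n$ — this last point is where one must be careful, since specialization can collapse the order. The cleanest route is to take $K = k(t,s)$ itself (a rational function field over a number field or finite field, hence the hypotheses of Theorem \ref{cheb} are met after one notes $k(t,s)$ is finitely generated of transcendence degree handled by Chebotarev in the function-field form) and argue directly there.

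Now run Chebotarev: the quadratic $L = K(\alpha)$, $\alpha^2 = t\alpha - s$, is Galois over $K$, so by Theorem \ref{cheb} there are infinitely many maximal ideals $P_1, P_2, \ldots$ of $\mathcal{O}_K$ that split completely, say $P_i \mathcal{O}_L = Q_i Q_i'$ with $Q_i \neq Q_i'$. For each $i$ let $v_i$ be the valuation of $L$ attached to $Q_i$. Because $P_i$ splits, the decomposition group is trivial, so $v_i$ and its conjugate $v_i \circ \sigma$ (where $\sigma$ is the nontrivial element of $\gal(L/K)$) are distinct extensions of $v_{P_i}$; in particular there is a unit $u_i \in \mathcal{O}_L^\times$ (localized at $Q_i$) with $v_i(u_i/\sigma(u_i)) \neq 0$ — or more simply, choose $\beta_i \in Q_i \setminus Q_i'$; then $v_i(\beta_i) > 0$ while $v_i(\sigma(\beta_i)) = 0$, so $v_i(\beta_i/\sigma(\beta_i)) > 0$. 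The element $\gamma_i := \beta_i/\sigma(\beta_i) \in L^\times$ has the feature that $\gamma_i \cdot \sigma(\gamma_i) = 1$, and its $v_i$-value is positive. I claim the classes $\gamma_1 K^\times, \gamma_2 K^\times, \ldots$ are $\Zz$-linearly independent in $A$: a relation $\prod \gamma_i^{m_i} \in K^\times$ would be fixed by $\sigma$, so $\prod \gamma_i^{m_i} = \sigma\!\left(\prod \gamma_i^{m_i}\right) = \prod \sigma(\gamma_i)^{m_i} = \prod \gamma_i^{-m_i}$, giving $\prod \gamma_i^{2m_i} = 1$ in $L^\times$; but applying $v_j$ to this, and using that $v_j(\gamma_j) > 0$ while $v_j(\gamma_i)$ can be controlled to be $0$ for $i \neq j$ (the $P_i$ are distinct primes, so $Q_i$ and $Q_j$ lie over different places and $\beta_i, \sigma(\beta_i)$ are $v_j$-units for $i \neq j$ once chosen in the appropriate localizations), forces $m_j = 0$ for every $j$. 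Hence $\rk(A) = \infty$.

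**Main obstacle.** The delicate point is the descent/specialization in the second step: ensuring that after passing to a finitely generated base field to which Chebotarev applies, we still have an element of infinite order in $L^\times/K^\times$, i.e.\ that the specialization does not artificially make some power of $\alpha$ land in $K^\times$. I expect this requires choosing the specialization generically (avoiding a thin/measure-zero set of bad parameters) or, better, bypassing specialization entirely by observing that the quadratic relation $\alpha^2 = t\alpha - s$ already exhibits $K$ as (a finite extension of, or exactly) a rational function field over $\Qq$ or $\Ff_p$ in the parameters $t,s$ together with the remaining generators, and Chebotarev as cited in Theorem \ref{cheb} covers the number-field and $\Ff_q(X)$ cases — so one must either argue that only transcendence degree one is needed here (reducing further) or invoke the higher-dimensional Chebotarev; the safe version is to specialize all but one transcendental carefully. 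The second mildly technical point is arranging the $v_j$-units simultaneously for all $i \neq j$, which is handled by strong approximation / the Chinese remainder theorem in the Dedekind domain $\mathcal{O}_L$.
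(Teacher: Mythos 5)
Your Chebotarev endgame (split primes $P_i = Q_i\sigma(Q_i)$, elements supported at $Q_i$ but not at the conjugate ideals or the earlier primes, independence read off by applying $\sigma$ and a valuation) is essentially the paper's argument for the global-field cases, and it is correct as far as it goes. The genuine gap is the reduction to a global field, which you yourself flag as the ``main obstacle'' but do not close. Your proposed routes all fail as stated: $\Qq(t,s)$ or $k(t,s)$ with two independent transcendentals is not covered by Theorem \ref{cheb}, which is stated only for number fields and $\Ff_q(X)$; and specializing the parameters does not produce a group embedding $L_0^\times/K_0^\times \hookrightarrow L^\times/K^\times$ --- specialization is a quotient-type operation on rings, not a field embedding compatible with the pair $(K,L)$, so the monotonicity of rational rank under subgroup inclusion gives you nothing there. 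Preserving ``$\alpha^n\notin K$ for all $n$'' under specialization is therefore not even the right invariant to worry about.

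The paper avoids all of this by normalizing the generator instead of working with a general quadratic $\alpha^2-t\alpha+s=0$: in characteristic $\neq 2$ one may choose $\alpha$ with $\alpha^2\in K$ (Kummer), and in characteristic $2$ with $\alpha^2+\alpha\in K$ (Artin--Schreier; here one should also note that Galoisness of $L/K$ is forced, since an inseparable degree-$2$ extension makes the quotient torsion, contradicting $\rk(A)>0$ --- you assert Galoisness without this observation). With such an $\alpha$ (replaced by $t\alpha$ or $t+\alpha$ for a suitable transcendental $t\in K$ if needed, to make $\alpha$ transcendental over the prime field), one gets $\Qq(\alpha)\cap K=\Qq(\alpha^2)$ (resp.\ the $\Ff_p$ analogues), hence an honest embedding of $\Qq(\alpha)^\times/\Qq(\alpha^2)^\times$ into $L^\times/K^\times$, landing in one of four global-field cases where Theorem \ref{cheb} applies. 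That single-variable normalization, yielding a sub-extension rather than a specialization, is the missing idea in your write-up; without it the argument does not compile into a proof.
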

\begin{remark}
This result will be generalized in Theorem \ref{gen} to a much wider class of commutative groups coming from multiplicative groups of fields.
\end{remark}
We proceed to the proof of Theorem \ref{mainalg}.
We start with a warm-up example which will be also used in the main proof.
\begin{example}\label{warmup}
We will show here two particular cases of Theorem \ref{mainalg}. For the proof of the first one, we will use a well-known algebraic fact that is also a direct consequence of Chebotarev's theorem. In the proof of the second one, Chebotarev's theorem will not be needed.
\begin{enumerate}
  \item We have
  $$\rk(\Qq[i]^\times/\Qq^\times)=\aleph_0.$$
\begin{proof}
Let us recall that a prime number $p\in \Zz$ splits (equivalently in this case: splits completely) in $\Zz[i]$ if and only if  $p\equiv 1 (\mbox{mod } 4)$ and that there are infinitely many such primes (this is a very special case of Theorem \ref{cheb}). Let us take an infinite sequence $p_1,p_2,\ldots$ of prime numbers which split in $\Zz[i]$.

We have $p_i=r_i\overline{r_i}$, where $r_i$ is a prime element of $\Zz[i]$ and $\overline{r_i}$ is the complex conjugate of $r_i$. Then $r_1,\overline{r_1},r_2,\overline{r_2},\ldots$ is a sequence of pairwise non-associated prime elements of $\Zz[i]$. We will show that the cosets $r_1\Qq^\times,r_2\Qq^\times,\ldots$ are $\Zz$-independent in $\Qq[i]^\times/\Qq^\times$.

Assume not, so there is a non-zero tuple $(n_1,n_2,...,n_k)\in \Zz^k$ such that $r_1^{n_1}r_2^{n_2}\ldots r_k^{n_k}\in\Qq^\times$ (witnessing that $r_1\Qq^\times,r_2\Qq^\times,..,r_k\Qq^\times$ are not $\Zz$-independent in $\Qq[i]^\times/\Qq^\times$). We have:
\[
r_1^{n_1}r_2^{n_2}...r_k^{n_k}=\overline{r_1}^{n_1}\overline{r_2}^{n_2}...\overline{r_k}^{n_k},\]
which contradicts the unique factorization in $\Zz[i]$ (after rearranging the displayed equation in such a way that all the exponents are positive).
\end{proof}

\item We have
  $$\rk(\Qq(X)^\times/\Qq(X^2)^\times)=\aleph_0.$$
\begin{proof}
The proof here is analogous to the proof in item $(1)$. We take pairwise distinct rational numbers $a_1,a_2,\ldots$ and substitute:
$$\Zz\rightsquigarrow \Qq[X^2],\ \ \Zz[i]\rightsquigarrow \Qq[X],\ \  p_i\rightsquigarrow X^2-a_i^2,\ \ r_i\rightsquigarrow X-a_i,$$
where the map $\Qq(X)\ni f\mapsto f(-X)\in \Qq(X)$ plays the role of the complex conjugation.
\end{proof}
\end{enumerate}
\end{example}
We will prove now several technical results which will be used in the proof of Theorem \ref{mainalg}.
\begin{lemma}\label{rank0}
Let $F\subseteq K$ be a field extension of degree $2$. Suppose that $F$ is algebraic over a finite field or the extension $F\subseteq K$ is not Galois. Then we have:
$$\rk\left(K^\times/F^\times\right)=0.$$
\end{lemma}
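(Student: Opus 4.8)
The plan is to split into the two cases given by the hypothesis and show in each that $K^\times/F^\times$ is a torsion group, which by Remark \ref{remrk}(1) forces $\rk(K^\times/F^\times)=0$.

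\textbf{Case 1: $F$ is algebraic over a finite field.} Then $K$, being a degree-$2$ extension of $F$, is also algebraic over a finite field, hence $K$ is (the union of an ascending chain of finite fields, so) a field all of whose nonzero elements are roots of unity. In particular $K^\times$ is a torsion group, so its quotient $K^\times/F^\times$ is torsion and has rational rank $0$.

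\textbf{Case 2: $F\subseteq K$ is not Galois (of degree $2$).} A degree-$2$ extension fails to be Galois precisely when it is not separable, i.e.\ when $\cha(F)=p>0$ and $K=F(\alpha)$ with $\alpha^p\in F$ but $\alpha\notin F$ (so $[K:F]=p=2$, hence $p=2$ and $\alpha^2\in F$). The key observation is then that for every $x\in K^\times$ we have $x^2\in F^\times$: indeed, writing $x=a+b\alpha$ with $a,b\in F$, in characteristic $2$ we get $x^2=a^2+b^2\alpha^2\in F$, and $x^2\neq 0$. Hence every element of $K^\times/F^\times$ is killed by $2$, so $K^\times/F^\times$ is torsion and $\rk(K^\times/F^\times)=0$.

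The two cases together establish the claim. The only place requiring a little care is pinning down the structure of a non-Galois degree-$2$ extension: one must note that for $[K:F]=2$, non-normality cannot occur (a degree-$2$ extension is automatically normal, being the splitting field of the minimal polynomial of any generator), so the obstruction to being Galois must be inseparability, which in characteristic $\neq 2$ is impossible and in characteristic $2$ gives exactly the purely inseparable situation $K=F(\sqrt{c})$ with $c\in F\setminus F^2$. Once this is in hand, the Frobenius-type identity $x^2\in F$ for all $x\in K$ is immediate and finishes the argument; this structural remark, rather than any computation, is the main (and only mild) obstacle.
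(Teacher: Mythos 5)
Your proof is correct and follows essentially the same route as the paper: in the first case both arguments note that $K^\times$ is torsion, and in the second both reduce a non-Galois degree-$2$ extension to the purely inseparable characteristic-$2$ situation and conclude that $K^2\subseteq F$, so the quotient is $2$-torsion. Your write-up just spells out in more detail the step (normality of degree-$2$ extensions) that the paper leaves implicit.
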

\begin{proof}
If $F$ is algebraic over a finite field, then $K^\times$ is torsion. Therefore, $K^\times/F^\times$ is torsion as well and $\rk\left(K^\times/F^\times\right)=0.$

Assume that the extension $F\subseteq K$ is not Galois. Then $\cha(F)=2$ and the extension $F\subseteq K$ is purely inseparable (since $[K:F]=2$). Therefore, $K^2\subseteq F$ (using again that $[K:F]=2$), hence the group $K^\times/F^\times$ is torsion again.
\end{proof}

For the proof of Theorem \ref{mainalg}, it is enough now to show the following:

\begin{proposition}\label{rankinfty}
Let $F\subseteq K$ be a Galois field extension of degree $2$ such that $F$ is not algebraic over a finite field. Then the rational rank of
$K^\times/F^\times$ is infinite.
\end{proposition}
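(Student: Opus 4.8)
The plan is to reduce the statement, via an elementary injectivity observation, to a single lemma about quadratic Galois extensions of \emph{global} fields, where Chebotarev's density theorem (Theorem \ref{cheb}) applies, together with the function-field special case already isolated in Example \ref{warmup}(2). The starting observation is this: if $k\subseteq F$ and $k\subseteq k'\subseteq K$ are subfields with $[k':k]=2$ and $k'\cap F=k$, then the natural homomorphism $k'^\times/k^\times\to K^\times/F^\times$ is injective, because an element of $k'^\times$ lying in $F^\times$ lies in $k'^\times\cap F^\times=k^\times$. By Remark \ref{remrk}(2), $\rk\left(K^\times/F^\times\right)\geq\rk\left(k'^\times/k^\times\right)$, so it suffices to exhibit such a pair $(k,k')$ with $\rk\left(k'^\times/k^\times\right)$ infinite.

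Next I would construct the pair $(k,k')$ by cases on $\cha F$, always using that $F$ is not algebraic over a finite field. If $\cha F\neq 2$, write $K=F(\sqrt d)$ with $d\in F^\times$. If $\cha F=0$ and $d$ is algebraic over $\Qq$, take $k=\Qq(d)$ (a number field) and $k'=k(\sqrt d)$; if $\cha F=0$ and $d$ is transcendental over $\Qq$, take $k=\Qq(d)$, $k'=\Qq(\sqrt d)$, so that $k'^\times/k^\times\cong\Qq(X)^\times/\Qq(X^2)^\times$, whose rational rank is $\aleph_0$ by Example \ref{warmup}(2). If $\cha F=p$, fix $t\in F$ transcendental over $\Fp$ and set $e:=d$ when $d$ is transcendental over $\Fp$ and $e:=dt^2$ when $d$ is algebraic over $\Fp$; in the latter case $e$ is transcendental over $\Fp$ (otherwise $t^2=e/d$ would be algebraic over $\Fp$), and in either case $K=F(\sqrt e)$, so one takes $k:=\Fp(e)\cong\Fp(X)$ and $k':=k(\sqrt e)$. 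If $\cha F=2$ the extension is Artin–Schreier, $K=F(\alpha)$ with $\alpha^2+\alpha=d$; fixing $t\in F$ transcendental over $\Ff_2$ and replacing $d$ by $e:=d$ or $e:=d+t^2+t$ makes $e$ transcendental over $\Ff_2$, and with $\beta$ a root of $X^2+X=e$ one has $F(\beta)=K$, $k:=\Ff_2(e)\cong\Ff_2(X)$, $k':=k(\beta)$. In every case one checks $[k':k]=2$ and $k'\cap F=k$, both because the relevant generator ($\sqrt e$ or $\beta$) is not in $F$, so $k'\cap F$ is a proper intermediate field of a degree-two extension.

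It then remains to prove the key lemma: if $k$ is a global field and $k'/k$ is a quadratic Galois extension, then $\rk\left(k'^\times/k^\times\right)=\aleph_0$. By Theorem \ref{cheb} there are infinitely many maximal ideals $P_1,P_2,\dots$ of $\mathcal{O}_k$ splitting completely in $k'$, say $P_i\mathcal{O}_{k'}=Q_iQ_i'$ with $Q_i\neq Q_i'$ interchanged by the nontrivial $\sigma\in\Gal(k'/k)$. Let $h$ be the (finite) class number of $\mathcal{O}_{k'}$, pick $x_i\in k'^\times$ with $(x_i)=Q_i^h$, and consider the homomorphisms $w_i:=v_{Q_i}-v_{Q_i'}\colon k'^\times\to\Zz$. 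Then $w_i$ vanishes on $k^\times$ (the $P_i$ are unramified, so $v_{Q_i}$ and $v_{Q_i'}$ agree on $k^\times$), $w_i(x_i)=h\neq 0$, and $w_j(x_i)=0$ for $j\neq i$ (the divisor of $x_i$ is supported only at $Q_i$). Hence if $\prod_i x_i^{n_i}\in k^\times$ then applying $w_j$ gives $n_jh=0$, so $n_j=0$; the cosets $x_ik^\times$ are therefore $\Zz$-independent, and since $k'^\times/k^\times$ is countable its rational rank is exactly $\aleph_0$. (This lemma also recovers Example \ref{warmup}(1), taking $k=\Qq$.)

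I expect the genuine obstacle to be the second step, not the lemma: one must locate an honest global field inside the possibly enormous $F$ over which the quadratic extension is already defined. This is immediate when the defining datum is algebraic over $\Qq$, but in positive characteristic that datum may accidentally land in a finite field, which forces the ``twist'' $d\rightsquigarrow dt^2$ (resp. $d\rightsquigarrow d+t^2+t$) using a transcendental element of $F$; and in characteristic $0$ with a transcendental datum no global field is available at all, which is precisely why Example \ref{warmup}(2) has to be established separately and fed into the argument here. A secondary technical point is that $\mathcal{O}_{k'}$ need not be a principal ideal domain, so the $\Zz$-independence must be run through valuations and the finiteness of the ideal class group rather than through unique factorization of elements as in the warm-up examples.
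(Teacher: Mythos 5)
Your proposal is correct, and its skeleton --- embed a quadratic extension $k'/k$ of a ``small'' field into $F\subseteq K$ via the observation that $k'\cap F=k$ forces $k'^\times/k^\times\hookrightarrow K^\times/F^\times$, then handle the small cases with Chebotarev --- is exactly the paper's Lemma \ref{reduce} followed by its case analysis; your four cases (number field; $\Qq(\sqrt d)/\Qq(d)$ with $d$ transcendental; $\Fp(\sqrt e)/\Fp(e)$; Artin--Schreier over $\Ff_2$) and your twists $d\rightsquigarrow dt^2$ and $d\rightsquigarrow d+t^2+t$ match the paper's replacements $\alpha\rightsquigarrow t\alpha$ and $\alpha\rightsquigarrow t+\alpha$. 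Where you genuinely diverge is in how $\Zz$-independence is certified in the global-field cases. The paper splits these: in the function-field cases it stays inside the PID $\Fp[X]$ (resp.\ $\Ff_2[X]$) and reruns the unique-factorization argument of Example \ref{warmup}; in the number-field case, where $\mathcal{O}_{k'}$ need not be a UFD, it chooses witnesses $a_i\in Q_i\setminus(\cdots)$ by the Prime Avoidance Lemma and derives a contradiction from membership in the single prime $Q_r$, never invoking the class group. You instead prove one uniform lemma for all global fields, using finiteness of the class number to produce generators $x_i$ of $Q_i^h$ and the homomorphisms $w_i=v_{Q_i}-v_{Q_i'}$, which vanish on $k^\times$ and separate the $x_i$; all the verifications there (unramifiedness giving $w_i|_{k^\times}=0$, $w_j(x_i)=h\delta_{ij}$) are sound. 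Your route buys uniformity --- one lemma covering cases (2)--(4) of Lemma \ref{reduce} and recovering Example \ref{warmup}(1) --- at the cost of importing finiteness of the ideal class group, a nontrivial input especially for function fields, where the paper gets by with Prime Avoidance, which holds in any commutative ring.
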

The next result allows as to reduce the proof of Proposition \ref{rankinfty} to several special cases.
\begin{lemma}\label{reduce}
Let $K\subseteq L$ be a field extension satisfying the assumption of Proposition \ref{rankinfty}. Then either
\begin{enumerate}
    \item $\Qq(X)^\times/\Qq(X^2)^\times$, or
    \item $F^\times/E^\times$, where $E\subseteq F$ is a number field extension of degree 2, or
    \item $\Ff_{p}(X)^\times/\Ff_{p}(X^2)^\times$ for $p\neq 2$, or
    \item $\Ff_{2}(X)^\times/\Ff_{2}(X^2+X)^\times$,
\end{enumerate}
embeds into $L^\times/K^\times$.
\end{lemma}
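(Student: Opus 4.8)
The strategy is to locate inside the degree-two Galois extension $K \subseteq L$ a ``small'' subextension of the same shape, coming from one of the four base fields listed, and then show that the corresponding quotient of multiplicative groups embeds into $L^\times/K^\times$. Since $[L:K]=2$ and the extension is Galois, we may write $L = K(\alpha)$ where $\alpha$ generates $L$ over $K$ and the nontrivial automorphism $\sigma \in \gal(L/K)$ sends $\alpha$ to its conjugate. The key point will be to find a subfield $E$ of $K$ (a number field or a rational function field over a prime field) such that $F := E(\alpha')$ with $\alpha'$ a suitable ``descent'' of $\alpha$ is a degree-two extension of $E$, $\sigma$ restricts to the nontrivial automorphism of $F/E$, and $F \cap K = E$; the last condition is what guarantees $L = FK$ is a compositum with $F^\times/E^\times \hookrightarrow L^\times/K^\times$ (the natural map $fE^\times \mapsto fK^\times$ is injective precisely because $F \cap K = E$ together with $F/E$ Galois forces $F^\times \cap K^\times = E^\times$).

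First I would dispose of the characteristic and transcendence-degree bookkeeping. Because $F$ (in the notation of Proposition~\ref{rankinfty}, i.e.\ our $K$) is not algebraic over a finite field, the prime field $k$ of $K$ together with the constraints splits into cases: either $\cha K = 0$, or $\cha K = p$ and $K$ has positive transcendence degree over $\Ff_p$. In the first case $K$ contains $\Qq$; in the second, $K$ contains $\Ff_p(t)$ for some $t$ transcendental over $\Ff_p$. Now I examine how $\sigma$ interacts with such a subfield. If $\sigma$ acts nontrivially already on some finitely generated subfield $E_0 \subseteq K$ that is itself of the right type --- more precisely, if I can choose $\alpha$ and then a subfield of $K$ over which $\alpha$ is quadratic and on which $\sigma$ is nontrivial --- I produce the sub-extension directly. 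In characteristic $0$ this produces case (2) after possibly enlarging to a number field (a finitely generated field of transcendence degree $0$ over $\Qq$ is a number field); if the relevant subfield has positive transcendence degree I can specialize a transcendental to land in case (2) or, keeping the transcendental, land in case (1) via $\Qq(X) \subseteq \Qq(X)(\alpha)$ with $\alpha^2 \in \Qq(X^2)$ after a change of variable. The positive-characteristic cases (3) and (4) arise the same way, the split between them being exactly the split between a quadratic extension generated by a square root (Kummer-type, $p \neq 2$, giving $\Ff_p(X)/\Ff_p(X^2)$) and an Artin--Schreier-type extension in characteristic $2$ (giving $\Ff_2(X)/\Ff_2(X^2+X)$, since the nontrivial automorphism is $X \mapsto X+1$ and its fixed field is $\Ff_2(X^2+X)$).

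The main obstacle I anticipate is the descent step: given only that $\sigma$ generates $\gal(L/K)$, it is not immediate that $\sigma$ restricts nontrivially to a subfield of the desired minimal type, nor that one can arrange $F \cap K = E$. I would handle this by working with the fixed field of $\sigma$ on a carefully chosen finitely generated subfield of $L$ that is stable under $\sigma$, contains a well-chosen generator $\alpha$ of $L/K$, and contains enough of the prime field's transcendence base; linear disjointness of this subfield from $K$ over its own $\sigma$-fixed field then follows because any element of the intersection is fixed by $\sigma$. A secondary subtlety in characteristic $2$ is ensuring the quadratic extension we extract is separable (Artin--Schreier) rather than inseparable --- but this is automatic, since $L/K$ is assumed Galois and hence separable, and separability is inherited by the subextension $F/E$. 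Once the embedding $F^\times/E^\times \hookrightarrow L^\times/K^\times$ is in place, the lemma is proved, and combined with the rational-rank computations of Example~\ref{warmup} (together with the easy fact from Remark~\ref{remrk}(2) that rational rank does not decrease under the inclusion of a subgroup) it will yield Proposition~\ref{rankinfty}.
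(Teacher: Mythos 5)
Your plan follows essentially the same route as the paper's proof: pick a Kummer generator ($\alpha^2\in K$, $\cha K\neq 2$) or an Artin--Schreier generator ($\alpha^2+\alpha\in K$, $\cha K=2$), adjoin it to the prime-field-generated base (adjusting by a transcendental $t$ in positive characteristic so the subextension is $\Ff_p(X)/\Ff_p(X^2)$ or $\Ff_2(X)/\Ff_2(X^2+X)$ rather than an extension of finite fields), and deduce injectivity of $F^\times/E^\times\to L^\times/K^\times$ from $F\cap K=E$, which in the paper is the observation $\Qq(\alpha)\cap K=\Qq(\alpha^2)$ and in your version follows from $\sigma$ acting nontrivially on $F$. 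The ``descent step'' you flag as the main obstacle is in fact immediate once $\alpha$ is chosen as the Kummer/Artin--Schreier generator (then $\sigma(\alpha)=-\alpha$ or $\alpha+1$), so no further work is needed there.
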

\begin{proof}
We consider several cases.
\\
\\
{\bf Case I} $\cha(K)=0$.
\\
\\
By a standard argument (which works whenever $\cha(K)\neq 2$), there is $\alpha\in L$ such that $\alpha^2\in K$ and $L=K(\alpha)$, so $K\subseteq L$ is a Kummer extension. Since we are in the characteristic 0 case, we can assume that $\Qq$ is a subfield of $K$. By our choice of $\alpha$, we have that $\Qq(\alpha)\cap K=\Qq(\alpha^2)$. Therefore, $\Qq(\alpha)^\times/\Qq(\alpha^2)^\times$ embeds into $L^\times/K^\times$. If $\alpha$ is transcendental over $\Qq$, then we are in the item $(1)$ situation. Otherwise, we are in the item $(2)$ situation.
\\
\\
{\bf Case II} $\cha(K)=p>2$.
\\
\\
As in Case I, there is $\alpha\in L$ such that $\alpha^2\in K$ and $L=K(\alpha)$, , so $K\subseteq L$ is a Kummer extension again. We can assume that $\Ff_p$ is a subfield of $K$. By our assumptions, there is $t\in K$ which is transcendental over $\Ff_p$. If $\alpha$ is algebraic over $\Ff_p$, then we replace $\alpha$ with $t\alpha$. If $\alpha$ is transcendental over $\Ff_p$, then we keep $\alpha$ as it is. After this possible replacement, we moreover obtain that $\alpha$ is transcendental over $\Ff_p$. As in Case I,  $\Ff_p(\alpha)^\times/\Ff_p(\alpha^2)^\times$ embeds into $L^\times/K^\times$ and we are in the item $(3)$ situation.
\\
\\
{\bf Case III} $\cha(K)=2$.
\\
\\
Since the extension $K\subseteq L$ is Galois, by Artin-Schreier theory there is $\alpha\in L$ such that $\alpha^2+\alpha\in K$ and $L=K(\alpha)$. We proceed now as in Case II, however we (possibly) need to replace $\alpha$ with $t+\alpha$ (rather than $t\alpha$). In the end, we are in the item $(4)$ situation.
\end{proof}

We are ready to show the main algebraic result of this section.

\begin{proof}[Proof of Prop. \ref{rankinfty}]
It is clear that if a commutative group $A$ embeds into a commutative group $B$, then we have $\rk(A)\leqslant \rk(B)$. Therefore, it is enough to consider the four cases given by the statement of Lemma \ref{reduce}.
\\
\\
{\bf Case (1)}
\\
$\rk(\Qq(X)^\times/\Qq(X^2)^\times)=\aleph_0$.
\\
\\
This is covered by Example \ref{warmup}(2).
\\
\\
{\bf Case (2)}
\\
$\rk(L^\times/K^\times)=\aleph_0$, where $K\subseteq L$ is a number field extension of degree 2.
\\
\\
This is a generalization of Example \ref{warmup}(1). Let  $\cO_L$ and $\cO_K$ denote the rings of algebraic integers of $L$ and $K$ respectively (as in Section \ref{seccheb}) and let $\gal(L/K)=\{\id,\sigma\}$.

By Theorem \ref{cheb}, there is an infinite sequence $P_1,P_2,\ldots$ of prime ideals in $\cO_K$ which split completely in $L$
(as we remarked earlier, in the case of the degree 2 extension, splitting implies splitting completely). Then for each $i$, we have:
$$P_i=Q_i\sigma(Q_i),$$
where $Q_i$ is a maximal ideal of $\cO_L$ and all the ideals $Q_1,\sigma(Q_1),Q_2,\sigma(Q_2),\ldots$ are pairwise distinct.

Using the Prime Avoidance Lemma (see e.g. \cite[Section 3.2]{comm}), we choose $a_1,a_2,\ldots \in \cO_L$ such that
\begin{itemize}
    \item $a_1\in Q_1\setminus\sigma(Q_1)$,
    \item $a_2\in Q_2\setminus(Q_1\cup \sigma(Q_1)\cup\sigma(Q_2))$,\\
    and in general:
    \item $a_i\in Q_i\setminus(Q_1\cup \ldots \cup Q_{i-1}\cup \sigma(Q_1)\cup\ldots \cup\sigma(Q_i))$.
\end{itemize}
Similarly as in Example \ref{warmup}(1) (although without using the unique decomposition), we will show that the cosets $a_1K^\times,a_2K^\times,\ldots$ are $\Zz$-independent in $L^\times/K^\times$.

Assume not, so there is a nonzero tuple $(n_1,\ldots,n_k)\in \Zz^k$ such that $a_1^{n_1}\ldots a_k^{n_k}\in K^\times$ (witnessing that $a_1K^\times,\ldots,a_kK^\times$ are not $\Zz$-independent in $L^\times/K^\times$). Since the tuple $(n_1,\ldots,n_k)$ is nonzero, there is a smallest $r\leqslant k$, such that $n_r\neq 0$. Without loss of generality, we can assume that $n_r>0$.

Since $a_{r}^{n_r}\ldots a_k^{n_k}\in K^\times$, we get
   \begin{equation}
a_{r}^{n_r}\ldots a_k^{n_k}=\sigma(a_{r}^{n_r}\ldots a_k^{n_k})=\sigma(a_{r})^{n_r}\ldots \sigma(a_k)^{n_k}.
\tag{$*$}
\end{equation}
By our choice of the sequence $a_1,a_2,\ldots$, we have:
   \begin{equation}
a_r\in Q_r,\ \ a_{r+1},\ldots,a_{_k}\notin Q_r,  \ \ \sigma(a_{r}),\ldots,\sigma(a_{k})\notin Q_r.\tag{$**$}
\end{equation}
Using $(*),(**)$ and doing a similar rearrangement as in Example \ref{warmup}(1), we get that there are elements $b\in \cO_L, c_1,\ldots,c_m\in \cO_L\setminus Q_r$ such that
   \begin{equation}
a_{r}^{n_r}b=c_1\ldots c_m. \tag{$***$}
\end{equation}
Since $n_r>0$, $a_r\in Q_r$ and $Q_r$ is prime, the equality $(***)$ yields a contradiction, since its left-hand side belongs to $Q_r$ and its right-hand side does not.
\\
\\
{\bf Case (3)}
\\
$\rk(\Ff_{p}(X)^\times/\Ff_{p}(X^2)^\times)=\aleph_0$, where $p\neq 2$.
\\
\\
The proof is almost the same as in Example \ref{warmup}(2), however now we substitute:
$$\Zz\rightsquigarrow \Ff_p[X^2],\ \ \Zz[i]\rightsquigarrow \Ff_p[X].$$
The choice of irreducible polynomials in $\Ff_p[X^2]$ which split in $\Ff_p[X]$ is not as straightforward as in Example \ref{warmup}(2). To obtain such a sequence, we again use Theorem \ref{cheb}, which also covers Case (3). With such a sequence at hand, we continue the proof as in Example \ref{warmup}(2).
\\
\\
{\bf Case (4)}
\\
$\rk(\Ff_{2}(X)^\times/\Ff_{2}(X^2+X)^\times)=\aleph_0$.
\\
\\
It is almost identical to the Case (3) situation, we just use the ring $\Ff_2[X^2+X]$ instead of the ring $\Ff_p[X^2]$.
\end{proof}
This concludes the proof of Proposition \ref{rankinfty}, and hence also the proof of Theorem \ref{mainalg}.\\

Our main model-theoretic result is below.
\begin{corollary}\label{main}
The class of Krasner hyperfields is not elementary.
\end{corollary}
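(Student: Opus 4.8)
The plan is to combine the algebraic result of Theorem \ref{mainalg} with the sequence of reductions established in Section \ref{prelim}, together with a construction of a group elementarily equivalent to some $A \in \mathcal{K}^\times$ but having rational rank strictly between $0$ and $\aleph_0$. By Lemmas \ref{red1}, \ref{oppmiddle} and \ref{opp3}, if the class of Krasner hyperfields were elementary, then the class $\mathcal{K}^\times$ would be closed under elementary equivalence. So it suffices to exhibit groups $G \equiv H$ with $G \in \mathcal{K}^\times$ and $H \notin \mathcal{K}^\times$. By Theorem \ref{mainalg}, every member of $\mathcal{K}^\times$ has rational rank $0$ or $\aleph_0$; hence it is enough to find $G \in \mathcal{K}^\times$ and a group $H \equiv G$ with $0 < \rk(H) < \aleph_0$, e.g.\ $\rk(H) = 1$.

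The first step is to pick a concrete $G \in \mathcal{K}^\times$ of infinite rational rank, for instance $G = \Qq(i)^\times/\Qq^\times$ from Example \ref{warmup}(1). The main work is then to produce a group $H$ with $H \equiv G$ but $\rk(H) = 1$. The natural approach is to analyze the structure of $G$ as an abelian group: by unique factorization, $\Qq(i)^\times \cong \Zz/4\Zz \oplus \bigoplus_{\aleph_0} \Zz$ (a torsion part generated by $i$, and a free part generated by the primes of $\Zz[i]$ up to associates), and quotienting by $\Qq^\times$ — which is itself free abelian of countable rank modulo $\{\pm 1\}$ — leaves a group of the form $(\text{finite torsion}) \oplus \bigoplus_{\aleph_0}\Zz$. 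Two such countable abelian groups $\bigoplus_{\aleph_0}\Zz$ and $\Zz$ are \emph{not} elementarily equivalent (the free rank, or rather divisibility-type invariants, can be detected in first-order logic for abelian groups — one uses the Szmielew invariants), so a direct single group will not work; instead one should take an ultrapower or a suitable elementary extension. Concretely, I would take $H$ to be an appropriate elementary \emph{sub}structure or extension: since $\rk$ is not preserved under elementary equivalence in general, the correct move is to invoke that some $H \equiv G$ must nonetheless lie in $\mathcal{K}^\times$ if the class is elementary, and then derive a contradiction by exhibiting \emph{some} $H \equiv G$ outside the rank constraint.

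On reflection, the cleanest route is the following. Suppose toward a contradiction that the class of Krasner hyperfields is elementary; then so is $\mathcal{K}$ (Lemmas \ref{red1}, \ref{oppmiddle}), hence $\mathcal{K}^\times$ is closed under elementary equivalence (Lemma \ref{opp3}). Fix $G \in \mathcal{K}^\times$ with $\rk(G) = \aleph_0$. Now one writes down a first-order sentence (or rather uses the fact that having rational rank at least $n$, i.e.\ the existence of $n$ elements no nontrivial product of which is trivial, is expressible by a single sentence $\varphi_n$ for each fixed $n$) and observes that $G \models \varphi_n$ for all $n$. Passing to a countable model via Löwenheim–Skolem, or more simply using compactness on the theory of $G$ together with axioms forcing rational rank to be a fixed finite value, one builds $H \equiv G$ with $\rk(H)$ finite and positive — the key point being that ``rational rank is infinite'' is \emph{not} first-order (it is a countable conjunction $\bigwedge_n \varphi_n$, not a single sentence), so it can fail in an elementarily equivalent group. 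Then $H \in \mathcal{K}^\times$ by closure under $\equiv$, contradicting Theorem \ref{mainalg}, which forces $\rk(H) \in \{0, \aleph_0\}$.

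The main obstacle — and the step that requires genuine care — is constructing $H \equiv G$ with $0 < \rk(H) < \aleph_0$, because rational rank is a delicate invariant: a priori it could be that every $H \equiv G$ has infinite rational rank, which would make the argument collapse. The resolution is to find, inside $G$ (or inside an elementarily equivalent group), a \emph{definable} or at least first-order-controllable quotient whose rank is governed by the Szmielew invariants of $G$; in the abelian-group case one knows precisely which such invariants are first-order, and one checks that the ``free part of countable rank'' contributes invariants that a group of rank $1$ can share up to elementary equivalence, so that a compactness argument yields the desired $H$. Once this model-theoretic input is in place, the contradiction with Theorem \ref{mainalg} is immediate and the corollary follows.
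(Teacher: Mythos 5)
Your reduction chain (Lemmas \ref{red1}, \ref{oppmiddle}, \ref{opp3}, then contradict Theorem \ref{mainalg} by exhibiting $G\in\mathcal{K}^\times$ and $H\equiv G$ with $0<\rk(H)<\aleph_0$) is exactly the paper's strategy, but the one step that carries all the weight --- actually producing the pair $(G,H)$ --- is missing, and the route you sketch cannot be completed. The underlying logical move, ``rational rank being infinite is not expressible by a single first-order sentence, hence some $H\equiv G$ has finite rank,'' is a non sequitur: a property need not be first-order to be preserved under elementary equivalence (``being infinite'' is the standard example), and neither compactness nor L\"{o}wenheim--Skolem produces a model of $\mathrm{Th}(G)$ of finite positive rank, since ``$\rk\leqslant n$'' is not axiomatizable either. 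Worse, your chosen witness $G=\Qq(i)^\times/\Qq^\times$ is one for which infinite rational rank \emph{is} preserved under $\equiv$: this group satisfies the first-order sentences ``$G[3]=1$'' and, for every $n$, ``$\dim_{\Ff_3}(G/G^3)\geqslant n$'' (the classes of the split primes of $\Zz[i]$ are independent modulo cubes). Any $H\equiv G$ inherits both families of sentences, and together they force $\rk(H)\geqslant n$ for all $n$: if $x_1,\ldots,x_n$ are independent modulo $H^3$ and $\prod x_i^{m_i}=1$ nontrivially, then factoring out the highest power of $3$ dividing all the $m_i$ and using $H[3^k]=1$ yields a nontrivial relation modulo $H^3$, a contradiction. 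So \emph{every} group elementarily equivalent to $\Qq(i)^\times/\Qq^\times$ has infinite rational rank, and your argument collapses precisely at the obstacle you yourself flagged.

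The repair is to choose $G$ \emph{divisible}, so that the Szmielew invariants retain no trace of the torsion-free rank. The paper takes $G=\Cc^\times/\Rr^\times\in\mathcal{K}^\times$: divisibility of $\Cc^\times$ gives $G\cong A\oplus\bigoplus_p C_{p^\infty}$ with $A$ a $\Qq$-vector space of dimension continuum, whence $G\equiv\Qq\oplus\bigoplus_p C_{p^\infty}$ by Szmielew's classification (or by an ultrapower argument), and the latter group has rational rank $1$. That single, explicit elementary equivalence is the entire content of the step you left open.
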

\begin{proof}
If the class of Krasner hyperfields is elementary, then the class $\mathcal{K^\times}$ of groups (see Notation \ref{knot}(2)) is closed under elementary equivalence by Lemmas \ref{red1}, \ref{oppmiddle}, \ref{opp3}. We will show that this is not the case.

Since $\Cc^\times$ is divisible, we have:
\[
\mathcal{K}^\times\ni \Cc^\times/\Rr^\times \cong A\oplus\bigoplus_p C_{p^\infty}\equiv \Qq\oplus\bigoplus_p C_{p^\infty},
\]
where $C_{p^\infty}$ is the Pr\"{u}fer $p$-group and $A$ is a vector space over $\Qq$ of dimension continuum. The isomorphism above follows from the classification of divisible commutative groups (see \cite[Theorem 5 in Section 4]{kaplanskyinf}) and  the elementary equivalence follows from the Szmielew's description (see \cite{szmielew}) of the theories of commutative groups (this elementary equivalence can be also directly shown by taking ultraproducts).

However, the rational rank of $\Qq\oplus\bigoplus_p C_{p^\infty}$ is 1, so this group does not belong to $\mathcal{K}^\times$ by Theorem \ref{mainalg}.
\end{proof}

\section{Generalization, conjecture and question}\label{specsec}
In this section we improve on Theorem \ref{mainalg} and we also discuss some model-theoretical problems related with hyperfields and the algebraic methods used in this paper.

We show now the following improvement of Theorem \ref{mainalg} to its proper generality.
\begin{theorem}\label{gen}
Let $F\subseteq K$ be an arbitrary field extension. Then both $\rk(F^\times)$ and $\rk(K^\times/F^\times)$ are $0$ or infinite.
\end{theorem}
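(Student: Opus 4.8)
The plan is to prove the two dichotomies separately: the one for $F^\times$ is elementary, while the one for $K^\times/F^\times$ reduces, after some bookkeeping about ranks of subquotients, to the Chebotarev argument of Proposition~\ref{rankinfty} carried out in arbitrary degree. For $F^\times$: if $\rk(F^\times)>0$, then $F$ carries an element of infinite multiplicative order, so $F$ is not algebraic over a finite field. If $\cha F=0$ then $\Qq\subseteq F$ and, by unique factorisation, $\Qq^\times\cong\{\pm1\}\times\bigoplus_p\Zz$, so $\rk(F^\times)\geq\rk(\Qq^\times)=\aleph_0$. If $\cha F=p>0$ then, $F$ not being algebraic over $\Ff_p$, it contains some $t$ transcendental over $\Ff_p$; since $\Ff_p[t]$ is a UFD with infinitely many monic irreducible polynomials, $\rk(\Ff_p(t)^\times)=\aleph_0$, whence $\rk(F^\times)\geq\aleph_0$. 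Thus $\rk(F^\times)$ is $0$ or infinite.

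For $K^\times/F^\times$: suppose $\rk(K^\times/F^\times)>0$ and fix $a\in K^\times$ with $a^n\notin F^\times$ for all $n\geq1$; since $F(a)^\times/F^\times$ injects into $K^\times/F^\times$, we may assume $K=F(a)$. If $a$ is transcendental over $F$, then $K^\times/F^\times\cong F(X)^\times/F^\times$ is free abelian on the monic irreducibles of $F[X]$, hence of infinite rank. If $a$ is algebraic over $F$, a routine argument (replace $F$ by $F_1(a)\cap F$, where $F_1$ is generated over the prime field by the coefficients of the minimal polynomial of $a$ over $F$) lets us assume in addition that $F$ is finitely generated over its prime field, while $K=F(a)$ stays finite of degree $\geq 2$ and $\rk(K^\times/F^\times)$ stays positive. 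Finally, if $E$ is the separable closure of $F$ in $K$, then $K/E$ is purely inseparable, so $K^\times/E^\times$ is torsion and $\rk(K^\times/F^\times)=\rk(E^\times/F^\times)$ by Remark~\ref{remrk}(2). Hence it suffices to prove: \emph{if $F$ is finitely generated over its prime field, $L/F$ is finite separable, and $L^\times/F^\times$ is not torsion, then $\rk(L^\times/F^\times)$ is infinite.}

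This reduced statement I would prove by the method of Case~(2) in the proof of Proposition~\ref{rankinfty}, with $d=[L:F]$ in place of $2$. Fix a finitely generated $\Zz$- or $\Ff_p$-subalgebra $R\subseteq F$ with $\mathrm{Frac}(R)=F$; since $R$ is excellent, its integral closure $S$ in $L$ is finite over $R$, and after inverting one nonzero element of $R$ we may assume $R\to S$ is finite étale, so all maximal ideals of $R$ have finite residue fields. Since $L^\times/F^\times$ is not torsion, $F$ is infinite, so $R$ has infinitely many maximal ideals; by the Chebotarev density theorem for finitely generated fields, applied to the Galois closure of $L/F$, infinitely many of them, say $\mathfrak p_1,\mathfrak p_2,\dots$, split completely in $S$, say $\mathfrak p_iS=\mathfrak q_{i,1}\cdots\mathfrak q_{i,d}$ with the $\mathfrak q_{i,j}$ pairwise distinct and unramified over $R$. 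Using the Prime Avoidance Lemma, choose $a_i\in\mathfrak q_{i,1}$ avoiding every other $\mathfrak q_{i',j}$ with $i'\leq i$. If some nontrivial product $\prod_i a_i^{n_i}$ lay in $F^\times$, then, taking the least $r$ with $n_r\neq0$ (say $n_r>0$), its valuation at $\mathfrak q_{r,1}$ is $\geq n_r>0$ while its valuation at $\mathfrak q_{r,2}$ is $0$, contradicting the fact that an element of $F^\times$ has equal valuations at $\mathfrak q_{r,1}$ and $\mathfrak q_{r,2}$ (both lie over the unramified $\mathfrak p_r$). So the $a_iF^\times$ are $\Zz$-independent and $\rk(L^\times/F^\times)=\aleph_0$. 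Alternatively one could first specialise $F$ down to a number field, resp. a finite extension of $\Ff_p(X)$, by a Hilbert-irreducibility argument, and then quote Theorem~\ref{cheb} verbatim.

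The main obstacle is exactly the case in which $[K:F]$ is not a power of $2$: there $K$ may have no quadratic subextension, so Theorem~\ref{mainalg} cannot be invoked directly, and one is forced to run the Chebotarev/prime-avoidance argument in arbitrary degree over a base field that is merely finitely generated over its prime field rather than a global field. Securing a usable Chebotarev statement at that level of generality — or, alternatively, carrying out the specialisation to a global field while keeping $a$ of infinite order modulo the base — is the one genuinely non-routine ingredient; the remaining steps are the rank bookkeeping sketched above, together with the elementary observation that the free part of a polynomial-ring unit group is generated by the irreducible polynomials.
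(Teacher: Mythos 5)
Your overall architecture matches the paper's: handle $\rk(F^\times)$ by embedding $\Qq^\times$ or $\Ff_p(t)^\times$, handle the transcendental case via $F(X)^\times/F^\times$, kill the purely inseparable part using Remark \ref{remrk}(2), and finish the finite separable case with Chebotarev plus prime avoidance plus an independence argument. The differences are in the last step. The paper reduces all the way down to $F$ being a global field ($F=\Ff_p(X)$ or a number field), so that it can quote the classical Chebotarev theorem of Section \ref{seccheb} and run the independence argument inside the Dedekind ring $\mathcal{O}_L$ of the normal closure, using a Galois conjugate $\sigma$ with $\sigma(a_r)\notin Q_{r1}\mathcal{O}_L$ and the primality of $Q_{r1}$. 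You stop the reduction at ``$F$ finitely generated over its prime field,'' which is easier to justify but forces you to invoke a Chebotarev density theorem for finitely generated fields (Serre/Pink style) rather than the classical one cited in the paper; that is a legitimate but considerably heavier ingredient, and you should be aware it is not the result quoted in Theorem \ref{cheb}.

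The one step that does not work as written is the final contradiction via ``valuations at $\mathfrak{q}_{r,1}$ and $\mathfrak{q}_{r,2}$.'' These are \emph{maximal} ideals of $S$, and once $F$ has transcendence degree at least $1$ over $\Qq$ (or at least $2$ over $\Ff_p$) the ring $S$ has Krull dimension at least $2$, so its maximal ideals are not height one and do not define valuations on $L$: for $R=\Zz[t]$ and $\mathfrak{m}=(p,t)$ there is no discrete valuation ``$v_{\mathfrak{m}}$'' for which membership in $\mathfrak{m}$ means positive value. You cannot simply replace valuations by membership in the prime either, because your product $\prod a_i^{n_i}$ equals $u/w$ with $u,w\in R$ possibly both lying in $\mathfrak{p}_r$, and then both sides of the resulting identity in $S$ lie in $\mathfrak{q}_{r,1}$ and primality gives nothing. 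The gap is repairable -- e.g.\ use the $\mathfrak{q}$-adic order function at a regular closed point (shrinking $\mathrm{Spec}\,R$ so that $R$ and $S$ are regular and $R\to S$ is étale makes $\mathrm{ord}_{\mathfrak{q}_{r,j}}$ a discrete valuation restricting on $F$ to $\mathrm{ord}_{\mathfrak{p}_r}$), or follow the paper and push the reduction down to a global field where $S$ genuinely is Dedekind -- but as stated the step fails in exactly the cases that go beyond global fields, which is the regime your version of the argument was meant to cover. The rest of the bookkeeping (the injection $F(a)^\times/F^\times\hookrightarrow K^\times/F^\times$, the replacement of $F$ by $F_1(a)\cap F$, the passage to the separable closure) is correct.
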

\begin{proof}[Sketch of Proof]
If $F$ is algebraic over a finite field, then $F^\times$ is torsion, so $\rk(F^\times)=0$. If $F$ is not algebraic over a finite field, then either $\Qq$ embeds into $F$ or $\Ff_p(X)$ embeds into $F$ for some prime number $p$. So, for the case of $\rk(F^\times)$, it is enough to notice that
$$\rk(\Qq^\times)=\rk(\Ff_p(X)^\times)=\aleph_0,$$
which follows (as in Example \ref{warmup}) from the fact that both of these fields are fraction fields of unique factorization domains with infinitely many pairwise non-associated prime elements.

We move now to the case of a field extension $F\subseteq K$. If the extension $F\subseteq K$ is not algebraic, we take a transcendental $t\in K$ and then $\rk(F(t)^\times/F^\times)$ is infinite by a similar argument as above.

If the extension $F\subseteq K$ is purely inseparable or $K$ is contained in the algebraic closure of a finite field, then $\rk(K^\times/F^\times)=0$ as in the proof of Lemma \ref{rank0}.

Therefore, we can assume that $F\subseteq K$ is a finite extension which is not contained in the algebraic closure of a finite field and which is also not purely inseparable. We aim to show that $\rk(K^\times/F^\times)$ is infinite. Let us take the field tower $F\subseteq K_0\subseteq K$, where the first extension is separable and non-trivial and the second one is purely inseparable. We have the following exact sequence:
$$1\to K_0^\times/F^\times\to K^\times/F^\times\to K^\times/K_0^\times\to 1$$
and we know that $\rk(K^\times/K_0^\times)=0$, so using Remark \ref{remrk}(2), we obtain that $\rk(K_0^\times/F^\times)=\rk(K^\times/F^\times)$, so we can moreover assume that $F\subseteq K$ is finite, separable, and $F=\Ff_p(X)$ or $F$ is a number field. Let $F\subseteq L$ be the normal closure of $F\subseteq K$ and $n:=[K:F]>1$. By Theorem \ref{cheb}, there are infinitely many prime ideals $P_1,P_2,\ldots$ of $\mathcal{O}_F$ which split completely in $\mathcal{O}_L$, so for each $i$, we also have $P_i\mathcal{O}_K=Q_{i1}\ldots Q_{in}$, where $Q_{ij}$'s are maximal ideals in $\mathcal{O}_K$. We take $a_1,a_2,\ldots \in \mathcal{O}_K$ such that for each $i$, we have:
$$a_i\in Q_{i1}\setminus \left(\bigcup_{j=1}^{i-1}Q_{j1}\cup \bigcup_{j=1}^i\bigcup_{k=2}^lQ_{jk}\right).$$
Then the proof of the Case $(2)$ situation from Lemma \ref{reduce} works after taking $\sigma\in \gal(L/F)$ such that $\sigma(a_r)\notin Q_{r1}\mathcal{O}_L$.
\end{proof}
Therefore (as in the proof of Corollary \ref{main}), the following class of groups:
$$\{K^\times\ |\ \text{$K$ is a field}\}$$
is \emph{not} elementary. Interestingly, a similar phenomenon appeared in \cite{HKTY} where the authors consider model completeness of groups of rational points of algebraic groups. One can ask the following.
\begin{question}\label{q}
Let $\mathbb{G}$ be a group scheme over $\Zz$. Are the following two conditions on $\mathbb{G}$ equivalent?
\begin{enumerate}
  \item The class
  $$\{\mathbb{G}(K)\ |\ \text{$K$ is a field}\}$$
  is elementary.

  \item If $K$ is a model complete field, then $\mathbb{G}(K)$ is a model complete group.
\end{enumerate}
\end{question}
The multiplicative group scheme $\gm$ fails both items $(1)$ and $(2)$ above. On the other hand, semisimple or unipotent algebraic groups seem to satisfy both these items, which is work in progress related to \cite{HKTY}. Therefore, we do not have counterexamples to the equivalence in Question \ref{q}. Actually, if item $(1)$ holds, then (as in \cite{HKTY}) it is usually an important step for proving that item $(2)$ holds. The fact that item $(1)$ holds for certain simple algebraic groups follows from \cite{segtent} and \cite{thomasthesis}.
\\
\\
While trying to understand hyperfields (or any other structures) model-theoretically, it is natural to ask first what are the ``model-theoretically simplest'', that is \emph{strongly minimal}, hyperfields. We propose the following.
\begin{conjecture}\label{c}
A hyperfield is strongly minimal if and only if it is either a strongly minimal field (i.e., an algebraically closed field) or a hyperfield where the hyperaddition is definable in the structure of its multiplicative group which is strongly minimal.
\end{conjecture}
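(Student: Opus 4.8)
The plan is to prove both implications, the backward one being essentially formal and the forward one being the real content, with the forward one split according to whether $\cH$ is an ordinary field. For the backward direction: an algebraically closed field is strongly minimal by quantifier elimination; and if $(\cH^\times,\cdot)$ is strongly minimal and the hyperaddition is definable in the multiplicative structure $(\cH,\cdot,\underline{0})$ — which is a strongly minimal group with a single absorbing point $\underline{0}$ adjoined, hence again strongly minimal — then $(\cH,+,\cdot,\underline{0},\underline{1})$ is a definitional expansion of $(\cH,\cdot,\underline{0})$, so it is strongly minimal as well. Thus all the work is in proving that a strongly minimal hyperfield $\cH$ satisfies the stated dichotomy.

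For that, I would first observe that $\cH^\times=\cH\setminus\{\underline{0}\}$ is a cofinite $\emptyset$-definable subset of $\cH$, so $(\cH^\times,\cdot)$ is a reduct of the (strongly minimal) induced structure on that set and is therefore itself a strongly minimal abelian group; this already secures the ``which is strongly minimal'' clause of case (2), so the only remaining question is whether $\cH$ is a field or the hyperaddition is definable in $(\cH,\cdot,\underline{0})$. If $\cH$ is a field, then it is a strongly minimal field, hence algebraically closed by the classical theorem of Macintyre that $\omega$-stable fields are algebraically closed, which puts us in case (1). So one may assume from now on that $\cH$ is a \emph{proper} hyperfield: by distributivity $x+y=x\cdot(1+x^{-1}y)$, so some $1+a$ with $a\in\cH^\times$ has at least two elements.

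The task is then to show that $+$ is definable from $(\cH,\cdot,\underline{0})$. For each $a$ the set $1+a$ is definable over $a$, hence finite or cofinite, and by compactness there is an $N$ bounding $|1+a|$ or $|\cH\setminus(1+a)|$ uniformly in $a$, with the corresponding case-distinction and cardinalities definable in $a$ (so, block by block, finite or cofinite). The plan is to show that for all but finitely many $a$ the set $1+a$ is cofinite, and in fact, in analogy with Fact \ref{opposite}(2), equals $\cH^\times$ minus a finite set whose dependence on $a$ is controlled by $\cdot$ alone; then to pin down the finitely many exceptional $a$, the value $1+1$, and $1+\underline{0}$; and finally, using reversibility and distributivity, to assemble from these an explicit definition of the ternary relation $+$ in $(\cH,\cdot,\underline{0})$. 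Once the generic cofiniteness of $1+a$ is established, reversibility ($z\in x+y\Rightarrow y\in z+(-x)$) together with distributivity should propagate cofiniteness to $x+y$ for all $x\ne y$ in $\cH^\times$, and a last use of strong minimality should uniformise the finitely many omitted points in $(x,y)$.

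The main obstacle is the excluded middle in the proper-hyperfield case: ruling out a ``uniformly bounded but genuinely multivalued'' hyperaddition, that is, $2\le|1+a|\le N$ for a cofinite set of $a$. In that regime $R=\{(z,a):z\in 1+a\}\subseteq\cH^2$ is definable with nonempty finite vertical fibres over a cofinite base, so it has Morley rank one, and, using commutativity, associativity, reversibility and distributivity, $R$ would encode a rank-one definable (multivalued) ``partial field/group law'' on the strongly minimal set $\cH$; the Hrushovski--Zilber group/field configuration analysis should then force $\cH$ either to interpret an algebraically closed field — a case to be absorbed back into (1) after checking that the hyperfield structure collapses to that of the interpreted field — or to be, up to this interpretation, a strongly minimal abelian group carrying an extra rank-one correspondence, and one would have to show this is incompatible with the hyperfield axioms (in particular with $x+x$ and reversibility) unless the correspondence is trivial, i.e.\ unless $+$ is already definable from $\cdot$. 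The delicate point is that we cannot assume $\cH=K/G$, so the quotient picture behind Proposition \ref{definable} and Remark \ref{desa} (and with it the closed formula of Fact \ref{opposite}(2)) is unavailable, and everything must be extracted from strong minimality plus the hyperfield axioms alone; it is conceivable that pushing this argument through actually shows that every strongly minimal proper hyperfield is Krasner, which would be a welcome by-product.
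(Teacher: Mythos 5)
The statement you are proving is Conjecture~\ref{c}: the paper offers \emph{no} proof of it and explicitly leaves it open, so there is nothing to compare your argument against --- it must stand on its own, and it does not yet. The parts of your proposal that are solid are the formal reductions: the backward direction (relativizing quantifiers to the definable set $\cH\setminus\{\underline{0}\}$ to see that adjoining an absorbing zero preserves strong minimality, then taking a definitional expansion), the observation that $(\cH^\times,\cdot)$ is a strongly minimal group because it is a reduct of the induced structure on a cofinite definable set, and the disposal of the field case via Macintyre's theorem. The uniform bound on $|1+a|$ versus $|\cH\setminus(1+a)|$ also follows correctly from elimination of $\exists^\infty$ in strongly minimal theories.

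The gap is the entire proper-hyperfield case, and you acknowledge it yourself. Two points make it more serious than a routine ``details to be filled in''. First, even in the favourable regime where $1+a$ is cofinite for cofinitely many $a$, the conjecture demands that the finitely many omitted points be definable \emph{in the multiplicative reduct} $(\cH,\cdot,\underline{0})$; strong minimality only gives you that they are algebraic over $a$ in the full hyperfield structure, and definability in $\cH$ does not pass to definability in a reduct, so the closed formula of Fact~\ref{opposite}(2) is not forced on you --- this step needs an actual argument exploiting associativity and reversibility, not just uniformization. Second, and more fundamentally, the ``uniformly bounded but genuinely multivalued'' regime is handled only by an appeal to the Hrushovski--Zilber group/field configuration, but you never exhibit such a configuration from the hyperfield axioms; and the dichotomy you want to fall back on (interpret a field, or be a locally modular group with a coset-like correspondence) is precisely Zilber's trichotomy, which Hrushovski's ab initio constructions show is false for general strongly minimal sets. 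Without producing a concrete group or field configuration inside $R=\{(z,a):z\in 1+a\}$, the case analysis has a third branch (non-locally-modular, no interpretable field) that your plan does not address. As written, the proposal is a reasonable research programme for attacking the conjecture, not a proof.
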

Since any infinite commutative group can be expanded to a hyperfield where the hyperaddition is definable just from one constant symbol (see Fact \ref{opp1}) there are plenty hyperfields as after ``or'' in the conjecture above.
\bibliographystyle{plain}
\bibliography{harvard}

\end{document}